\theoremstyle{definition}
\theoremstyle{remark}
\theoremstyle{theoremtB}
\theoremstyle{theoremt41}
\theoremstyle{theoremtA}
\theoremstyle{theoremt61}
\numberwithin{equation}{section}
\newtheorem*{theoremtB}{Theorem \ref{tB}}
\newtheorem*{theoremt41}{Theorem \ref{t41}}
\newtheorem*{theoremtA}{Theorem \ref{tA}}
\newtheorem*{theoremt61}{Theorem \ref{t61}}
\begin{document}
\title[$C_0$-positive CR torsion solitons]{$C_0$-positivity and a
classification of closed three-dimensional CR torsion solitons}
\author{$^{\dag }$Huai-Dong Cao$^{1}$}
\address{$^{1}$Department of Mathematics, Lehigh University, Bethlehem, PA
18015, USA}
\email{huc2@lehigh.edu }
\author{$^{\ast }$Shu-Cheng Chang$^{2}$}
\address{$^{2}$Department of Mathematics and Taida Institute for
Mathematical Sciences (TIMS), National Taiwan University, Taipei 10617,
Taiwan, R.O.C.}
\email{scchang@math.ntu.edu.tw }
\author{$^{\ast }$Chih-Wei Chen$^{3}$}
\address{$^{3}$Department of Applied Mathematics, National Sun Yat-sen
University, Kaohsiung, Taiwan, R.O.C.}
\email{BabbageTW@gmail.com, chencw@math.nsysu.edu.tw}
\thanks{$^{\dag }$Research partially supported  by a Simons Foundation Collaboration Grant}
\thanks{$^{\ast }$Research supported in part by the MOST of Taiwan}
\subjclass[2010]{Primary 32V05, 53C44; Secondary 32V20, 53C56}
\keywords{CR Harnack quantity, CR torsion soliton, CR Paneitz operator.}

\begin{abstract}
A closed CR 3-manifold is said to have $C_{0}$-positive pseudohermitian
curvature if $(W+C_{0}Tor)(X,X)>0$ for any $0\neq X\in T_{1,0}(M)$. We
discover an obstruction for a closed CR 3-manifold to possess $C_{0}$%
-positive pseudohermitian curvature. We classify closed three-dimensional CR
Yamabe solitons according to $C_{0}$-positivity and $C_{0}$-negativity
whenever $C_{0}=1$ and the potential function lies in the kernel of Paneitz
operator. Moreover, we show that any closed three-dimensional CR torsion
soliton must be the standard Sasakian space form. At last, we discuss the
persistence of $C_{0}$-positivity along the CR torsion flow starting from a
pseudo-Einstein contact form.
\end{abstract}

\maketitle

\section{Introduction}

Self-similar solutions, also known as geometric solitons, of various
geometric flows have attracted lots of attentions in recent years because
their close ties with singularity formations in the flows. In particular,
important progress has been made in the study of Ricci solitons,
self-similar solutions to the mean curvature flow, as well as Yamabe
solitons, etc.

There have been several flows proposed to investigate the geometry and
topology of CR manifolds, such as CR Yamabe flow, $Q$-curvature flow, CR
Calabi flow, and CR torsion flow. Among them, CR torsion flow behaves more like the Riemannian Ricci flow. Similar to their Riemannian
counterparts, for the CR Yamabe flow it is relatively easier to prove long time existence and convergence results than the CR torsion flow, but it provides less information about the local geometry.

The CR torsion flow is to deform the CR contact form $\theta $ and the
complex structure $J$ by Tanaka-Webster curvature $W$ and torsion $A$
respectively. Namely, 
\begin{equation}
\left\{ 
\begin{array}{l}
\frac{\partial J}{\partial t}=2A_{J,\theta }, \\ 
\frac{\partial \theta }{\partial t}=-2W\theta.%
\end{array}%
\right.  \label{0}
\end{equation}%
Note that the second equation alone is called the CR Yamabe flow, where to a
certain extent  $J$ is freely changed. So the first equation is added to confine the
behavior of $J$ and the CR torsion tensor $A$. In the paper \cite{cw}, it was shown that there exists a unique smooth solution to the
CR torsion flow (\ref{0}) in a small time interval with a certain CR
pluriharmonic function as the initial data. In spirit, it is the CR
analogue of the Cauchy-Kovalevskaya local existence and uniqueness theorem
for analytic partial differential equations associated with Cauchy initial
value problems. Unlike its Riemannian analogue, the problem of asymptotic
convergence of three-dimensional CR torsion flows is widely open (\cite{ckw}%
). Inspired by Perelman's work \cite{pe1} and \cite{pe2} on Hamilton's Ricci
flow and its solitons, we introduce the analogous notion of Ricci solitons -
CR torsion solitons for the CR torsion flow. The structure of CR torsion
solitons may be a necessary ingredient towards understanding the asymptotic
convergence of solutions of the CR torsion flow (\ref{0}). Indeed, one
expects CR torsion solitons to model singularity formations of the CR
torsion flow. The equations of three-dimensional CR torsion solitons are 
\begin{equation}
\left\{ 
\begin{array}{l}
W+\frac{1}{2}f_{0}=\mu , \\ 
f_{11}+iA_{11}f=-A_{11},%
\end{array}%
\right.  \label{1B}
\end{equation}%
and the ones for CR Yamabe solitons are 
\begin{equation}
\left\{ 
\begin{array}{l}
W+\frac{1}{2}f_{0}=\mu , \\ 
f_{11}+iA_{11}f=0,%
\end{array}%
\right.  \label{2B}
\end{equation}%
for some smooth function $f$ on a pseudohermitian $3$-manifold $%
(M^{3},J,\theta )$. Note that they are the same when $M$ has vanishing
torsion. In \cite{ccc}, we have derived some classification theorems for CR
Yamabe solitons. Here we study CR torsion solitons and their
classifications, especially under a curvature assumption called $C_{0}$%
-positive.

We say that a CR $3$-manifold is \textit{$C_{0}$-positive} if the
pseudohermitian curvature is $C_{0}$-positive, i.e., 
\begin{equation*}
(W+C_{0}Tor)(X,X)>0,\ \ \mathrm{for\ any}\ \ 0\neq X=x^{1}Z_{1}\in
T_{1,0}(M),
\end{equation*}%
where $W(X,X):=Wx^{1}x^{\bar{1}}$ is the Tanaka-Webster operator, sometimes
called the pseudohermitian Ricci curvature, and $Tor(X,X):=2\mathrm{Re}(iA_{%
\bar{1}\bar{1}}x^{\bar{1}}x^{\bar{1}})$ is the pseudohermitian torsion
tensor. This condition is a key to drive the most general CR Li-Yau gradient
estimate that serves as a generalization of the CR analogue of Cao-Yau
gradient estimate (\cite{cy}) in a closed pseudohermitian manifold with such
a nonnegative pseudohermitian curvature condition and non-vanishing torsion
as in \ \cite{ckl}, \cite{cftw} and \cite{ccl}. We obtain the following
result for $C_{0}$-positive CR $3$-manifolds.

\begin{theorem}\label{tB} 
Let $M$ be a closed CR $3$-manifold. For any $C_0\geq 0$, $C_0$%
-positive is equivalent to the curvature-torsion pinching condition $%
W(x)>2C_{0}|A_{11}|(x)$ for all $x\in M $. Moreover, if $M$ is $C_0$%
-positive with $C_{0}\geq \frac{1}{2}$, then $M$ admits a Riemannian metric
of positive scalar curvature.
\end{theorem}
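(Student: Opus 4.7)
\textit{Part I (equivalence).} Parametrize $X = x^{1}Z_{1}\in T_{1,0}(M)$ with $x^{1} = re^{i\varphi}$ and write $A_{\bar 1\bar 1} = |A_{11}|e^{i\psi}$. A direct expansion yields
\[
(W+C_{0}\,\text{Tor})(X,X) \;=\; r^{2}\bigl[\,W + 2C_{0}|A_{11}|\cos(\psi+\tfrac{\pi}{2}-2\varphi)\,\bigr].
\]
Since $\varphi\in[0,2\pi)$ is free and the cosine attains $-1$ on the orbit, the infimum of this quadratic form over nonzero $X$ of fixed modulus equals $r^{2}(W-2C_{0}|A_{11}|)$, and this infimum is actually attained by some $X$. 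Hence $C_{0}$-positivity holds at a point if and only if $W>2C_{0}|A_{11}|$ at that point, giving the equivalence on all of $M$.

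\textit{Part II (construction of the PSC metric).} For $C_{0}\geq\tfrac{1}{2}$ the pinching yields $W>|A_{11}|\geq 0$ pointwise, so in particular $W>0$. My plan is to use the Webster (Tanno) adapted Riemannian metric $g_{\theta} = \tfrac{1}{2}d\theta(\cdot,J\cdot) + \theta\otimes\theta$ and to verify that either $g_{\theta}$ itself, or some conformal rescaling of it, has positive Riemannian scalar curvature. The computational input is the standard identity expressing the Levi-Civita connection of $g_{\theta}$ in terms of the Tanaka-Webster connection, its torsion, and the Reeb vector field (as developed by Tanno and in Dragomir-Tomassini). From this one extracts a formula of the schematic shape
\[
R_{g_{\theta}} \;=\; aW + b - c\,|A_{11}|^{2} + (\text{divergence terms}),
\]
with positive universal constants $a,b,c$. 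Substituting the pinching $W>|A_{11}|$ and, if pointwise positivity fails, integrating to study the first eigenvalue of the conformal Laplacian $-\Delta_{g_{\theta}}+\tfrac{1}{8}R_{g_{\theta}}$ (i.e.\ the Yamabe invariant), one produces the desired PSC metric via Kazdan-Warner.

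\textit{Main obstacle.} The delicate point is that our hypothesis $W>|A_{11}|$ is \emph{linear} in the torsion, whereas a naive Webster scalar-curvature formula carries a term \emph{quadratic} in $|A_{11}|$ that can dominate when the torsion is large. To close this gap one either (a) performs an anisotropic rescaling of the Reeb direction, or a contact-conformal change $\theta\mapsto u\theta$ with $u>0$ tailored so that the extra Levi-Civita terms contribute a compensating positive multiple of $|A_{11}|^{2}$ to $R_{g}$; or (b) shows only that the conformal Yamabe invariant of $(M,g_{\theta})$ is positive by trading $\int|A_{11}|^{2}$ against $\int W$ through a Bochner--Weitzenb\"ock identity and integration by parts, and then invokes the Yamabe solution for a PSC representative. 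Identifying the correct modification, and verifying that the threshold $C_{0}=\tfrac{1}{2}$ is exactly what the computation demands, is the technical heart of the proof.
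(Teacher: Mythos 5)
Your Part~I is correct, and in fact cleaner than the paper's own argument. The paper sets $x^{1}=1+si$, optimizes the resulting rational function $f(s)=-4\frac{as+b}{1+s^{2}}+2b$ over $s\in\mathbb{R}$, and locates its critical points explicitly; your polar parametrization $x^{1}=re^{i\varphi}$ collapses the form to $r^{2}\bigl[W+2C_{0}|A_{11}|\cos(\psi+\tfrac{\pi}{2}-2\varphi)\bigr]$, and since the cosine sweeps $[-1,1]$ as $\varphi$ varies, the equivalence $C_{0}$-positivity $\Leftrightarrow W>2C_{0}|A_{11}|$ is immediate. Both routes are correct; yours avoids the calculus.

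Part~II, however, is a list of candidate strategies and an honest admission that you have not carried out the decisive computation, so there is a genuine gap. The fix you call option~(a) — an anisotropic rescaling of the Reeb direction — is exactly what the paper does: it introduces the family $g_{\lambda}=d\theta+\lambda^{-2}\theta^{2}$ and uses the Chang--Chiu formula $R^{\lambda}=4W-2\lambda^{2}|A_{11}|^{2}-2\lambda^{-2}$. Setting $\mu=\lambda^{2}$, positivity of $R^{\lambda}$ amounts to the quadratic $2|A_{11}|^{2}\mu^{2}-4W\mu+2$ taking a negative value, which by the discriminant condition $16(W^{2}-|A_{11}|^{2})>0$ happens precisely when $W>|A_{11}|$, i.e.\ when $C_{0}\geq\tfrac12$; compactness of $M$ then yields a single constant $\lambda$. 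Two concrete defects in your sketch: first, your schematic $R_{g_{\theta}}=aW+b-c|A_{11}|^{2}+(\text{div})$ has the sign of the constant wrong — it is $-2\lambda^{-2}$, \emph{negative}, which is exactly why $g_{\theta}$ itself will not do and why the theorem genuinely requires $W>0$ (note $W=A_{11}=0$ admits no admissible $\lambda$). Second, your fallback option~(b), showing the conformal Yamabe invariant is positive by integrating against a Bochner identity and then invoking Kazdan--Warner, would need quantitative control of the divergence terms that you never supply, and is in any case far heavier than the pointwise one-parameter computation that actually closes the argument.
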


For closed three-dimensional CR Yamabe solitons $(M^{3},J,\theta )$, we
derive the following classification according to $C_0$-(non)positivity.

\begin{theorem}\label{t41} 
\textrm{(i)} Any simply connected closed three-dimensional CR
Yamabe soliton with $(W+Tor)(X,X)>0$ for $0\neq X\in T_{1,0}(M)$ and $%
P_{0}f=0$ must be the standard CR three sphere.

\textrm{(ii)} Any closed three-dimensional CR Yamabe soliton with $%
(W+Tor)(X,X)=0$ for $0\neq X\in T_{1,0}(M)$ and $P_{0}f=0$ must be the
standard Heisenberg space form which is a Seifert fiber space over a
euclidean $2$-orbifold with nonzero Euler number.

\textrm{(iii)} Any closed three-dimensional CR Yamabe soliton with $%
(W+Tor)(X,X)<0$ for $0\neq X\in T_{1,0}(M)$ and nonnegative CR Paneitz
operator must be the standard Lorentz space form which is a Seifert fiber
space over a hyperbolic orbifold with nonzero Euler number.
\end{theorem}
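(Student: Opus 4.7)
The approach is to combine the two CR Yamabe soliton equations in (\ref{2B}) with the CR Bochner/commutation identities to produce a single integral identity that forces the horizontal gradient of the potential $f$ to vanish, after which each case reduces to the classification of closed Sasakian $3$-manifolds by the sign of their constant Webster scalar curvature. A first remark: by \thmref{tB}, the pointwise condition $(W+Tor)(X,X)>0$ (resp.\ $<0$) is exactly $W>2|A_{11}|$ (resp.\ $W<-2|A_{11}|$), and the equality case $(W+Tor)(X,X)\equiv0$ tested on $X=Z_1$, $X=(1+i)Z_1$, $X=(1-i)Z_1$ immediately forces $W\equiv0$ and $A_{11}\equiv0$. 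So in case (ii) the CR structure is already flat Sasakian and the soliton data only carves out the potential; the conclusion then follows from the standard structure theory identifying closed flat Sasakian $3$-manifolds as Seifert fibrations over Euclidean $2$-orbifolds with nonzero Euler number.

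For (i) and (iii) the task is to show that the CR Yamabe soliton is torsion-free with constant Webster scalar curvature. The plan is to differentiate $f_{11}+iA_{11}f=0$ in the $Z_{\bar{1}}$ direction, commute covariant derivatives using the pseudohermitian Ricci identities (which bring in $W$ and $A_{11}$), then substitute for $f_0$ using $W+\tfrac12 f_0=\mu$. Multiplying by $f_{\bar{1}}$ and integrating by parts over the closed $M$, after cancellations one expects a schematic identity
\begin{equation*}
\int_M (W+Tor)(\nabla^b f,\nabla^b f)\,dV_\theta \;+\; c\int_M f\,P_0 f\,dV_\theta \;=\; 0
\end{equation*}
with an explicit positive constant $c$. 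In case (i), $(W+Tor)>0$ and the hypothesis $P_0 f=0$ kill the second integral and make the first integrand nonnegative, so it must vanish pointwise and strict positivity yields $\nabla^b f\equiv0$. In case (iii), $(W+Tor)<0$ makes the first integrand nonpositive, while the nonnegativity of $P_0$ gives $\int f\,P_0 f\ge 0$; both terms must vanish, so again $\nabla^b f\equiv 0$.

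Once $\nabla^b f\equiv 0$, the second soliton equation gives $A_{11}f\equiv 0$ and the first yields $W\equiv \mu$. If $f\not\equiv 0$, then $A_{11}\equiv 0$; if $f\equiv 0$, the soliton is a Yamabe metric, and the curvature-torsion pinching again forces $A_{11}\equiv 0$ together with $W$ constant of the required sign. Thus $M$ is a closed torsion-free pseudohermitian $3$-manifold of constant Webster scalar curvature, i.e., a standard Sasakian space form. In (i), simple connectivity plus positive curvature (together with the positive-scalar Riemannian metric supplied by \thmref{tB}) identifies $M$ with the standard CR three-sphere via Tanno's classification; in (iii), negative constant Webster curvature places $M$ among the Lorentz space forms, realized as Seifert fibrations over hyperbolic $2$-orbifolds with nonzero Euler number. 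The main obstacle is producing the integral identity with the correct sign on the Paneitz remainder: the torsion contributions from commuting derivatives must assemble precisely into the bilinear form $W|f_1|^2+2\mathrm{Re}(iA_{\bar{1}\bar{1}}f_{\bar{1}}^{\,2})$, and the fourth-order residue must match $c\,f\,P_0 f$ with $c>0$. A secondary point is the appeal to the Belgun/Geiges classification of closed Sasakian $3$-manifolds to upgrade constant Webster curvature to the explicit Seifert fibered geometry stated in (ii) and (iii).
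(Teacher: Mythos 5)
Your overall strategy is essentially the paper's: differentiate the soliton equation $f_{11}+iA_{11}f=0$, commute derivatives, pair with $f_{\bar 1}$ and integrate by parts to reach an integral identity involving $(W+Tor)$ and the CR Paneitz operator, conclude $\nabla_bf\equiv 0$ and $A_{11}\equiv 0$, and then quote the classification of closed torsion-free pseudohermitian $3$-manifolds of constant Webster curvature (Kamishima--Tsuboi, Tanno); your pointwise polarization argument in case (ii) is fine and even more direct than what the paper does. However, the identity you posit has the wrong structure. Carrying out the computation (as in the paper, identity (\ref{intHarYamabe})) gives
\begin{equation*}
0=\int_M|A_{11}|^2f^2\,d\mu+\int_M(W+Tor)\bigl((\nabla_bf)_{\mathbb{C}},(\nabla_bf)_{\mathbb{C}}\bigr)\,d\mu-\frac12\int_M(P_0f)\,f\,d\mu,
\end{equation*}
so the Paneitz term enters with a \emph{negative} coefficient, and there is an additional nonnegative term $\int_M|A_{11}|^2f^2\,d\mu$ which your schematic identity omits. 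Moreover, the commutations produce $W_1$-terms that are not removed merely by substituting $f_0=2(\mu-W)$; the paper disposes of them by first invoking the known fact that every closed three-dimensional CR Yamabe soliton has $W\equiv\mu$ (\cite{ccc}, see also \cite{ho}), a step missing from your outline.

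These discrepancies are fatal for case (iii) as you argue it: with your sign $+c\int fP_0f$, nonnegativity of $P_0$ makes that term nonnegative while the $(W+Tor)$-term is nonpositive, and two terms of opposite sign summing to zero need not both vanish, so ``both terms must vanish'' is a non sequitur. With the correct sign the Paneitz term is indeed nonpositive under nonnegative $P_0$, but then the omitted nonnegative term $\int_M|A_{11}|^2f^2$ obstructs the same all-terms-vanish conclusion; this is exactly the delicate point hidden behind the paper's terse ``(ii) and (iii) can be similarly derived,'' and your proposal does not supply the missing argument either. Case (i) does survive, since $P_0f=0$ kills the Paneitz term regardless of its sign and the extra term only helps (its vanishing gives $A_{11}f\equiv0$ directly). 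Finally, your fallback ``if $f\equiv 0$, the curvature-torsion pinching again forces $A_{11}\equiv0$'' is unjustified: by Theorem \ref{tB} the pinching $W>2|A_{11}|$ is an open condition and does not force the torsion to vanish; one concludes $A_{11}\equiv0$ from $A_{11}f\equiv0$ only when $f$ is a nonzero constant (which does follow from $\nabla_bf\equiv0$ and $f_0=0$ when $f\not\equiv0$), and the trivial case $f\equiv0$ needs a separate treatment not provided by this route.
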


\begin{remark}
The condition $P_0f=0$ is not very restrictive in the sense that the kernel
of the CR Paneitz operator $P_{0}$ is infinite dimensional, containing all
CR-pluriharmonic functions (see \cite{hi}).
\end{remark}

On the other hand, we can classify closed three-dimensional CR torsion
solitons without using the pseudohermitian curvature condition. This is the
CR analogue of the fact that closed three-dimensional Ricci solitons must be
Einstein.

\begin{theorem}\label{tA} 
Every closed three-dimensional CR torsion soliton (\ref{1B}) has
constant Tanaka-Webster curvature and zero torsion $A_{11}=0$. Therefore,
they must belong to one of the following three cases:

\textrm{(i)} The standard CR spherical space form in case $W=1$.

\textrm{(ii)} The standard Heisenberg space form in case $W=0$.

\textrm{(iii)} The standard Lorentz space form in case $W=-1$.
\end{theorem}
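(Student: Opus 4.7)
The plan is to prove the two separate assertions in turn: first that the torsion vanishes, $A_{11}\equiv 0$, and then that the Tanaka--Webster curvature $W$ is constant; once both hold, $M$ is a closed Sasakian 3-manifold with constant pseudohermitian curvature, and appealing to the standard classification of such spaces (as already invoked in Theorems \ref{tB} and \ref{t41}) gives the trichotomy (i)--(iii).

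For $A_{11}\equiv 0$, the key observation is that the second soliton equation in (\ref{1B}) can be rewritten as $f_{11}=-A_{11}(1+if)$. Multiplying both sides by $A_{\bar 1\bar 1}$ and taking real parts (using that $f$ is real) yields the pointwise identity
\begin{equation*}
\mathrm{Re}\bigl(f_{11}\,A_{\bar 1\bar 1}\bigr)=-|A_{11}|^{2}.
\end{equation*}
I would then integrate this identity over the closed manifold $M$ against the volume $\theta\wedge d\theta$ and integrate by parts on the left, transferring one derivative from $f_{11}$ onto $A_{\bar 1\bar 1}$. The crucial input here is the contracted Bianchi identity on a pseudohermitian 3-manifold, which has the schematic form $A_{\bar 1\bar 1,1}=-iW_{\bar 1}$, so the resulting integrand becomes a combination of $f_1\,W_{\bar 1}$ and lower-order commutator/torsion terms. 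The first soliton equation $W=\mu-\tfrac12 f_0$ then lets me eliminate $W_{\bar 1}$ in favor of $f_{0\bar 1}$, and a further integration by parts, together with the standard CR commutation relations, should produce only divergence terms (which vanish on the closed $M$) and terms proportional to $|A_{11}|^{2}$ with a favorable sign. This would force $\int_M |A_{11}|^{2}\,d\mu\leq 0$, hence $A_{11}\equiv 0$.

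Once $A_{11}\equiv 0$, the structure is Sasakian and (\ref{1B}) reduces to $W+\tfrac12 f_0=\mu$ together with $f_{11}=0$. The vanishing torsion simplifies all commutation relations: in particular $f_{01}=f_{10}$ and $f_{11,\bar 1}=f_{1\bar 1,1}+W f_1$. Differentiating the first equation gives $W_1=-\tfrac12 f_{01}=-\tfrac12 f_{10}$, and on the other hand the condition $f_{11}=0$, after a $\bar 1$-differentiation and use of the Sasakian commutator, produces $f_{1\bar 1,1}+Wf_1=0$. Combining these two relations with a further integration by parts over $M$ (using that $|f_1|^2$, $|f_{11}|^2$, $|f_{1\bar 1}|^2$ are integrable) should collapse to $W_1\equiv 0$, and conjugation gives $W_{\bar 1}\equiv 0$, so $W$ is constant along the contact distribution; the Reeb derivative of $W$ is then also controlled via the first soliton equation. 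After a homothetic rescaling of $\theta$ to normalize $W\in\{-1,0,1\}$, the resulting closed Sasakian 3-manifold of constant Tanaka--Webster curvature must be one of the three standard space forms, which is exactly cases (i)--(iii). The main obstacle I anticipate is the careful bookkeeping in the integration-by-parts step that yields $A_{11}\equiv 0$: the torsion and Reeb-direction terms generated by commuting $Z_1$ and $Z_{\bar 1}$ derivatives must be shown either to assemble into exact divergences or to combine into a manifestly signed multiple of $|A_{11}|^2$, and verifying this requires a systematic use of the two soliton equations in tandem rather than either one alone.
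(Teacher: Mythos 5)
Your starting identity is fine: writing the second soliton equation as $f_{11}=-A_{11}(1+if)$ and pairing with $A_{\bar 1\bar 1}$ indeed gives $\mathrm{Re}\bigl(f_{11}A_{\bar 1\bar 1}\bigr)=-|A_{11}|^{2}$ pointwise. But the step you lean on to close the argument is wrong: there is no first-order Bianchi identity of the form $A_{\bar 1\bar 1,1}=-iW_{\bar 1}$ on a pseudohermitian $3$-manifold. That relation is precisely the pseudo-Einstein condition $H_{1}=W_{1}-iA_{11,\bar 1}=0$ (in conjugate form), and the paper itself stresses that $H_1$ does \emph{not} vanish in general when $n=1$; it is an extra hypothesis in Section 6, not an identity available for an arbitrary soliton. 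The only identity you may use here is the second-order one, $A_{11,\bar 1\bar 1}+A_{\bar 1\bar 1,11}=W_{0}$. Consequently your plan to prove $A_{11}\equiv 0$ \emph{first}, by showing the integrated left-hand side has a favorable sign on its own, cannot work as described: integrating your identity by parts twice (moving both derivatives off $f$) and using the genuine Bianchi identity gives
\begin{equation*}
-2\int_{M}|A_{11}|^{2}\,d\mu \;=\; 2\int_{M}\mathrm{Re}\bigl(f_{11}A_{\bar 1\bar 1}\bigr)\,d\mu \;=\;\int_{M} f\,W_{0}\,d\mu \;=\;-\int_{M} f_{0}W\,d\mu \;=\;2\int_{M}W(W-\mu)\,d\mu,
\end{equation*}
whose right-hand side has no sign until you also use $\int_{M}(W-\mu)\,d\mu=-\tfrac12\int_{M}f_{0}\,d\mu=0$ to rewrite it as $2\int_{M}(W-\mu)^{2}\,d\mu$. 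At that point both conclusions drop out \emph{simultaneously}: $\int_{M}(W-\mu)^{2}\,d\mu=-\int_{M}|A_{11}|^{2}\,d\mu\le 0$ forces $W\equiv\mu$ and $A_{11}\equiv 0$. This is exactly the mechanism of the paper, which reaches the same integral identity by integrating the CR Harnack quantity (\ref{2}); so your two-stage structure (torsion first, then a separate Sasakian argument for constancy of $W$) should be abandoned in favor of the one-shot integral identity, and your Step 2 becomes unnecessary. The final classification step (constant $W$, zero torsion, then Kamishima--Tsuboi/Tanno after normalizing $W\in\{-1,0,1\}$) is consistent with what the paper does.
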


In the last section, we derive some basic evolution equations for curvature
and torsion along the CR torsion flow and then obtain the preserving
property for the uniform pinching pseudohermitian curvature condition under
certain assumptions.

\begin{theorem}\label{t61} 
Let $(M^{3},J,\overset{0}{\theta })$ be a closed CR manifold with pseudo-Einstein $\overset{0}{\theta}$. 
If there exists a pluriharmonic function $\gamma(0)$ such that $W(0)$ of 
$\theta (0):=e^{2\gamma (0)}\overset{0}{\theta }$ is pluriharmonic, then the solution $\theta(t)$ of the
CR torsion flow 
\begin{equation*}
\left\{ 
\begin{array}{l}
\frac{\partial J}{\partial t}=2A_{J,\theta }, \\ 
\frac{\partial \theta }{\partial t}=-2W\theta , \\ 
\overset{0}{P}\gamma (t)=0,\ \overset{0}{P}W(t)=0%
\end{array}%
\right.  
\end{equation*}%
has the following evolution equations 
\begin{equation*}
\left\{ 
\begin{array}{ccl}
\frac{\partial W}{\partial t} & = & 5\Delta _{b}W+2(W^{2}-|A_{11}|^{2}), \\ 
\frac{\partial |A_{11}|^{2}}{\partial t} & = & \Delta
_{b}|A_{11}|^{2}-2|\nabla |A_{11}||^{2}.%
\end{array}%
\right. 
\end{equation*}%
Moreover, if $W>2C_{0}\max |A_{11}|$ initially with $C_{0}\geq \frac{1}{2}$,
then it holds for all $t<T$. In particular, the solution $\theta (t)$ is $%
C_{0}$-positive for all $t<T$, i.e., 
\begin{equation*}
(W+C_{0}Tor)(X,X)>0\ \mbox{ for all non-vanishing}\ X\in T_{1,0}(M)\mbox{
and for all }t<T.
\end{equation*}
\end{theorem}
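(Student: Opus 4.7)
The plan is to first derive the two evolution equations by working in the conformal gauge $\theta(t)=e^{2\gamma(t)}\overset{0}{\theta}$, for which the scalar part of the flow becomes $\partial_t\gamma=-W$, and then propagate the pinching $W>2C_{0}\max|A_{11}|$ by the parabolic maximum principle combined with Theorem~\ref{tB}.

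For the evolution of $W$, I would start from the CR conformal-change formula expressing $W(t)$ in terms of $\gamma(t)$ and the background curvature $\overset{0}{W}$. Since $\overset{0}{\theta}$ is pseudo-Einstein (so in dimension $3$ the background $\overset{0}{W}$ is itself CR-pluriharmonic) and both $\gamma(t)$ and $W(t)$ are kept in $\ker\overset{0}{P}$ by hypothesis, the higher-order commutator pieces that normally appear when $\partial_t$ is pushed through the conformal change all lie in the image of $\overset{0}{P}$ and hence drop out. The contribution from the scalar flow alone yields a heat-type equation of the form $4\Delta_{b}W+2W^{2}$; the additional flow $\partial_t J=2A_{J,\theta}$ contributes one further Laplacian term plus the algebraic $-2|A_{11}|^{2}$ after contracting Webster-Ricci against the torsion variation, assembling to the stated $5\Delta_{b}W+2(W^{2}-|A_{11}|^{2})$.

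For the evolution of $|A_{11}|^{2}$, I would compute $\partial_t A_{11}$ directly from the variations $\partial_t J=2A_{J,\theta}$ and $\partial_t\theta=-2W\theta$ using the structure equations; the pluriharmonicity of $\gamma$ and $W$ again eliminates the mixed drift terms of the form $W_{,1}A_{\bar 1\bar 1}$ that would otherwise spoil the gradient-flow structure. Contracting with $A_{\bar 1\bar 1}$ and applying the CR Bochner identity together with the Kato inequality rearranged as $|A_{11}|\Delta_{b}|A_{11}|=\tfrac12\Delta_{b}|A_{11}|^{2}-|\nabla|A_{11}||^{2}$ produces the stated $\Delta_{b}|A_{11}|^{2}-2|\nabla|A_{11}||^{2}$.

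For the preservation step, the evolution of $|A_{11}|^{2}$ is a subsolution of the sub-Laplacian heat equation (since $-2|\nabla|A_{11}||^{2}\le 0$), so the scalar maximum principle gives $M(t):=\max_{M}|A_{11}|(\cdot,t)\le M(0)$. At a point where $W$ attains its spatial minimum $m(t)$, $\Delta_{b}W\ge 0$, whence
\begin{equation*}
\tfrac{d^-}{dt}m(t)\;\ge\;2\bigl(m(t)^{2}-M(t)^{2}\bigr)\;\ge\;2\bigl(m(t)^{2}-M(0)^{2}\bigr).
\end{equation*}
The hypothesis $C_{0}\ge\tfrac{1}{2}$ together with $m(0)>2C_{0}M(0)$ gives $m(0)>M(0)$, so an ODE-comparison argument forces $m(t)\ge m(0)$ for all $t<T$; consequently $m(t)\ge m(0)>2C_{0}M(0)\ge 2C_{0}M(t)$, and Theorem~\ref{tB} applied slice-wise upgrades this pinching to $C_{0}$-positivity. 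The main obstacle is the derivation of the evolution of $W$: tracking exactly how the complex-structure flow couples into the conformal variation of Webster-Ricci, and verifying that the combined Paneitz-image contributions cancel to produce the clean coefficient $5$ in $5\Delta_{b}W$, is where the pseudo-Einstein and pluriharmonic hypotheses must be used in full.
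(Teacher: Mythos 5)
Your plan identifies the right obstacle---producing the clean coefficient $5$ in $5\Delta_b W$---and then stops short of resolving it, so the derivation of the two evolution equations remains a genuine gap. Asserting that ``the higher-order commutator pieces lie in the image of $\overset{0}{P}$ and hence drop out'' is an expectation, not an argument. The paper's route avoids the conformal-gauge bookkeeping entirely: it starts from the generic torsion-flow evolution equations of \cite{cw},
\begin{equation*}
\frac{\partial W}{\partial t} = 4\Delta_b W + 2(W^2-|A_{11}|^2) + i(A_{11,\bar 1\bar 1}-A_{\bar 1\bar 1,11}),\qquad
\frac{\partial A_{11}}{\partial t} = -2iW_{11}+A_{11,1\bar 1}-A_{11,\bar 1 1},
\end{equation*}
then substitutes the CR $Q$-curvature identity $Q=-\tfrac12\bigl[\Delta_b W - i(A_{11,\bar 1\bar 1}-A_{\bar 1\bar 1,11})\bigr]$ to rewrite the extra term as $\Delta_b W + 2Q$, yielding $5\Delta_b W + 2(W^2-|A_{11}|^2)+2Q$, and likewise produces an extra term $4\,\mathrm{Im}(H_{11}A_{\bar 1\bar 1})$ in the evolution of $|A_{11}|^2$. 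The decisive input you do not surface is that the gauged flow keeps $\theta(t)=e^{2\gamma(t)}\overset{0}{\theta}$ pseudo-Einstein because $\gamma(t)$ stays pluriharmonic, so $H_1:=W_1-iA_{11,\bar 1}\equiv 0$; this forces $Q=0$ and simultaneously annihilates $4\,\mathrm{Im}(H_{11}A_{\bar 1\bar 1})$. Without these explicit identities there is no cancellation to speak of.

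Your preservation step, by contrast, is correct. The comparison $\tfrac{d^-}{dt}m(t)\ge 2\bigl(m(t)^2-M(0)^2\bigr)$ with $m(0)>M(0)$ (which follows from $m(0)>2C_0M(0)$ and $2C_0\ge 1$) gives $m(t)\ge m(0)$ by a standard barrier argument, hence $m(t)>2C_0M(0)\ge 2C_0M(t)$, establishing $W>2C_0\max|A_{11}|$ for the full range $C_0\ge\tfrac12$; the paper's final paragraph explicitly records only the $C_0=\tfrac12$ pinching $W(t)>\max|A_{11}(t)|$, though the same comparison yields your sharper statement. Your reduction to Theorem~\ref{tB} to convert pointwise pinching into $C_0$-positivity matches the paper's use of Lemma~\ref{c41}.
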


This theorem is motivated by the flow approach for proving Frankel's conjecture in K\"ahler geometry, e.g. \cite{ct1,ct2}. As a CR analogue of it, we conjecture that the CR torsion flow which starts from a closed CR manifold with pseudo-Einstein $\theta(0)$, positive constant curvature $W(0)$ and $W+\frac{1}{2}Tor(X,X)>0$ must converge to the Sasakian space form.  We believe Theorem \ref{t61} will be useful for proving the conjecture.\\


\noindent \textbf{Acknowledgments.} Part of the project was done during the
visit of the second author to Yau Mathematical Sciences Center, Tsinghua
University. He would like to express his thanks for the warm hospitality. 

\section{Preliminaries}

We give a brief introduction to pseudohermitian geometry on a closed $3$%
-manifold (see \cite{l1,l2} for more details). Let $M$ be a closed $3 $%
-manifold with an oriented contact structure $\xi $. There always exists a
global contact form $\theta $, obtained by patching together local ones with
a partition of unity. The characteristic vector field of $\theta $ is the
unique vector field $T$ such that ${\theta }(T)=1$ and $\mathcal{L}_{T}{%
\theta }=0$ or $d{\theta }(T,{\cdot })=0$. A CR structure compatible with $%
\xi $ is a smooth endomorphism $J:{\xi }{\rightarrow }{\xi }$ such that $%
J^{2}=-Id$. A pseudohermitian structure compatible with $\xi $ is a CR
-structure $J$ compatible with $\xi $ together with a global contact form $%
\theta $. The CR structure $J$ can extend to $\mathbb{C}\otimes \xi $ and
decomposes $\mathbb{C}\otimes \xi $ into the direct sum of $T_{1,0}$ and $%
T_{0,1}$ which are eigenspaces of $J$ with respect to $i$ and $-i$,
respectively.

Let $\left \{ T,Z_{1},Z_{\bar{1}}\right \} $ be a frame of $TM\otimes 
\mathbb{C}$, where $Z_{1}$ is any local frame of $T_{1,0},\ Z_{\bar{1}}=%
\overline{Z_{1}}\in T_{0,1}$ and $T$ is the characteristic vector field.
Then $\left
\{ \theta ,\theta ^{1},\theta ^{\bar{1}}\right \} $, the coframe
dual to $\left \{ T,Z_{1},Z_{\bar{1}}\right \} $, satisfies 
\begin{equation}
d\theta =ih_{1\bar{1}}\theta ^{1}\wedge \theta ^{\bar{1}},  \label{22}
\end{equation}%
for some positive function $h_{1\bar{1}}$. Actually we can always choose $%
Z_{1}$ such that $h_{1\bar{1}}=1$; hence, throughout this paper, we assume $%
h_{1\bar{1}}=1$.

The Levi form $\left \langle \ ,\ \right \rangle _{L_{\theta }}$ is the
Hermitian form on $T_{1,0}$ defined by%
\begin{equation*}
\left \langle Z,Y\right \rangle _{L_{\theta }}=-i\left \langle d\theta
,Z\wedge \overline{Y}\right \rangle .
\end{equation*}%
We can extend $\left \langle \ ,\ \right \rangle _{L_{\theta }}$ to $T_{0,1}$
by defining $\left \langle \overline{Z},\overline{Y}\right \rangle
_{L_{\theta }}=\overline{\left \langle Z,Y\right \rangle }_{L_{\theta }}$
for all $Z,Y\in T_{1,0}$. The Levi form induces naturally a Hermitian form
on the dual bundle of $T_{1,0}$, denoted by $\left \langle \ ,\
\right
\rangle _{L_{\theta }^{\ast }}$, and hence on all the induced tensor
bundles. Integrating the Hermitian form (when acting on sections) over $M$
with respect to the volume form $d\mu =\theta \wedge d\theta $, we get an
inner product on the space of sections of each tensor bundle. We denote the
inner product by the notation $\left \langle \ ,\ \right \rangle $. For
example, 
\begin{equation}
\left \langle \varphi ,\psi \right \rangle =\int_{M}\varphi \overline{\psi}\
d\mu ,  \label{21}
\end{equation}%
for functions $\varphi $ and $\psi $.

The pseudohermitian connection of $(J,\theta )$ is the connection $\nabla $
on $TM\otimes \mathbb{C}$ (and extended to tensors) given in terms of a
local frame $Z_{1}\in T_{1,0}$ by

\begin{equation*}
\nabla Z_{1}=\theta _{1}{}^{1}\otimes Z_{1},\quad \nabla Z_{\bar{1}}=\theta
_{\bar{1}}{}^{\bar{1}}\otimes Z_{\bar{1}},\quad \nabla T=0,
\end{equation*}%
where $\theta _{1}{}^{1}$ is the $1$-form uniquely determined by the
following equations:

\begin{equation}
\begin{split}
d\theta ^{1}& =\theta ^{1}\wedge \theta _{1}{}^{1}+\theta \wedge \tau ^{1},
\\
\tau ^{1}& \equiv 0\mod \theta^{\bar{1}}, \\
0& =\theta _{1}{}^{1}+\theta _{\bar{1}}{}^{\bar{1}},
\end{split}
\label{id10}
\end{equation}%
where $\tau ^{1}=A^{1}{}_{%
\bar{1}}\theta ^{\bar{1}}$ is the pseudohermitian torsion. Moreover, the structure equation for $\nabla$ is

\begin{equation}
d\theta_{1}{}^{1}=W\theta ^{1}\wedge \theta ^{\bar{1}}+2i \rm{Im}(A^{%
\bar{1}}{}_{1,\bar{1}}\theta ^{1}\wedge \theta ),  \label{id11}
\end{equation}%
where $W$ is the Tanaka-Webster scalar curvature and the index preceded by
a comma denotes the covariant derivative. The
indices $0, 1$ and $\bar{1}$ indicate derivatives with respect to $T, Z_1$
and $Z_{\bar{1}}$. For derivatives of a scalar function, we will often omit
the comma, for instance, $\varphi_{1}=Z_1\varphi,\ \varphi_{1\bar{1}}=Z_{%
\bar{1}}Z_1\varphi-\theta_1^1(Z_{\bar{1}})Z_1\varphi,\ \varphi_{0}=T\varphi$
for a (smooth) function $\varphi$.

For a real function $\varphi $, the subgradient $\nabla _{b}$ is defined by $%
\nabla _{b}\varphi \in \xi $ and $\left \langle Z,\nabla _{b}\varphi
\right
\rangle _{L_{\theta }}=d\varphi (Z)$ for all vector fields $Z$
tangent to the contact plane. Locally $\nabla _{b}\varphi =\varphi _{\bar{1}%
}Z_{1}+\varphi _{1}Z_{\bar{1}}$. We can use the connection to define the
subhessian as the complex linear map

\begin{equation*}
(\nabla ^{H})^{2}\varphi :T_{1,0}\oplus T_{0,1}\rightarrow T_{1,0}\oplus
T_{0,1}
\end{equation*}%
by 
\begin{equation*}
(\nabla ^{H})^{2}\varphi (Z)=\nabla _{Z}\nabla _{b}\varphi .\ \ 
\end{equation*}%
Also 
\begin{equation*}
\Delta _{b}\varphi =Tr\left( (\nabla ^{H})^{2}\varphi \right) =(\varphi _{1%
\bar{1}}+\varphi _{\bar{1}1}).
\end{equation*}%
For all $Z=x^{1}Z_{1}\in T_{1,0}$, we define

\begin{equation*}
\begin{split}
Ric(Z,Z)& =Wx^{1}x^{\bar{1}}=W|Z|_{L_{\theta }}^{2}, \\
Tor(Z,Z)& =2\mathrm{Re}\ iA_{\bar{1}\bar{1}}x^{\bar{1}}x^{\bar{1}}.
\end{split}%
\end{equation*}

Next we recall the definition of CR Paneitz operator.

\begin{definition}
\label{4} Let $(M,J,\theta )$ be a closed three-dimensional pseudohermitian
manifold. We define (\cite{l1}) 
\begin{equation*}
P\varphi =(\varphi _{\bar{1}}{}^{\bar{1}}{}_{1}+iA_{11}\varphi ^{1})\theta
^{1}=(P_{1}\varphi )\theta ^{1},
\end{equation*}%
which is an operator that characterizes CR-pluriharmonic functions. Here $%
P_{1}\varphi =\varphi _{\bar{1}}{}^{\bar{1}}{}_{1}+iA_{11}\varphi ^{1}$ and $%
\overline{P}\varphi =\overline{P_{1}\varphi}\theta ^{\bar{1}}$, the
conjugate of $P$. The CR Paneitz operator $P_{0}$ is defined by%
\begin{equation}
P_{0}\varphi =\left( \delta _{b}(P\varphi )+\overline{\delta }_{b}(\overline{%
P}\varphi )\right) ,  \label{id9}
\end{equation}%
where $\delta _{b}$ is the divergence operator that takes $(1,0)$-forms to
functions by $\delta _{b}(\sigma _{1}\theta ^{1})=\sigma _{1,}{}^{1}$, and
similarly, $\overline{\delta}_{b}(\sigma _{\bar{1}}\theta ^{\bar{1}})=\sigma
_{\bar{1},}{}^{\bar{1}}$.
\end{definition}

We observe that%
\begin{equation}
\int_{M}\langle P\varphi +\overline{P}\varphi ,d_{b}\varphi \rangle
_{L_{\theta }^{\ast }}\ d\mu =-\int_{M}P_{0}\varphi \cdot \varphi \ d\mu 
\label{10}
\end{equation}%
with $d\mu =\theta \wedge d\theta .$ One can check that $P_{0}$ is
self-adjoint, that is, $\left\langle P_{0}\varphi ,\psi \right\rangle
=\left\langle \varphi ,P_{0}\psi \right\rangle $ for all smooth functions $%
\varphi $ and $\psi $. For the details about these operators, the reader can
make reference to \cite{gl}, \cite{hi}, \cite{l1}, \cite{gl} and \cite{fh}.




\section{Geometry and Topology of $C_0$-positive CR $3$-manifolds}


In this section, we study the geometry and topology of CR $3$-manifolds
which are $C_{0}$-positive, namely, for all $0\neq X=x^{1}Z_{1}\in T_{1,0}(M)
$, 
\begin{align}
(W+C_{0}Tor)(X,X)=& \ Wx^{1}x^{\bar{1}}+2C_{0}\mathrm{Re}[i(A_{\bar{1}\bar{1}%
}x^{\bar{1}}x^{\bar{1}})]  \label{WTor} \\
=& \ Wx^{1}x^{\bar{1}}+C_{0}i(A_{\bar{1}\bar{1}}x^{\bar{1}}x^{\bar{1}%
}-A_{11}x^{1}x^{1})>0.  \notag
\end{align}

Our aim in this section is to prove the following

\begin{theoremtB}
Let $M$ be a closed CR $3$-manifold. For any $C_0\geq 0$, $C_0$-positive is
equivalent to the curvature-torsion pinching condition $%
W(x)>2C_{0}|A_{11}|(x)$ for all $x\in M$. Moreover, if $M$ is $C_0$-positive
with $C_{0}\geq \frac{1}{2}$, then $M$ admits a Riemannian metric of
positive scalar curvature.
\end{theoremtB}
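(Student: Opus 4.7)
My plan treats the two assertions separately.

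\textbf{Part I: Equivalence of $C_{0}$-positivity with the pinching.} I would parametrize $x^{1}=re^{i\alpha }$ and $A_{11}=|A_{11}|e^{i\beta }$ and substitute into the expansion of $(W+C_{0}\mathrm{Tor})(X,X)$ recorded in (\ref{WTor}). Using $A_{\bar{1}\bar{1}}(x^{\bar{1}})^{2}-A_{11}(x^{1})^{2}=-2ir^{2}|A_{11}|\sin (\beta +2\alpha )$, a one-line computation gives
\begin{equation*}
(W+C_{0}\mathrm{Tor})(X,X)=r^{2}\bigl(W+2C_{0}|A_{11}|\sin (\beta +2\alpha )\bigr).
\end{equation*}
As $\alpha $ ranges over $[0,2\pi )$, the bracketed quantity sweeps the whole interval $[W-2C_{0}|A_{11}|,\,W+2C_{0}|A_{11}|]$ at each point $x$. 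Hence positivity for every nonzero $X\in T_{1,0}(M)$ at $x$ is equivalent to the lower endpoint being positive, i.e.\ to $W(x)>2C_{0}|A_{11}|(x)$, which is the pinching claim.

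\textbf{Part II: Producing a PSC metric when $C_{0}\geq 1/2$.} From Part I the hypothesis forces $W>|A_{11}|$ pointwise on $M$, and compactness of $M$ sharpens this to $W-|A_{11}|\geq \delta >0$ for some $\delta $. The natural candidate is the Webster (adapted) metric $g_{\theta }=L_{\theta }+\theta \otimes \theta $. The Levi--Civita connection of $g_{\theta }$ differs from the Tanaka--Webster connection by an explicit tensor built from the complex structure $J$ (contributing the Reeb-direction sectional curvatures, i.e.\ the ``Sasakian'' part) and the torsion $\tau $. Running this comparison through in the convention $h_{1\bar{1}}=1$ (cf.\ Dragomir--Tomassini or Chern--Hamilton) should yield a scalar curvature expression of the shape
\begin{equation*}
R_{g_{\theta }}=aW+b-c|A_{11}|^{2}
\end{equation*}
with definite positive constants $a,b,c$. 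Since $|A_{11}|^{2}$ enters quadratically, the Webster metric itself may fail to have positive scalar curvature when $|A_{11}|$ is large. To fix this I would apply the constant rescaling $\widetilde{\theta }=\lambda ^{2}\theta $; a short computation with the structure equations shows $\widetilde{W}=\lambda ^{-2}W$ and $\widetilde{A}_{11}=\lambda ^{-2}A_{11}$, so the pinching $\widetilde{W}>|\widetilde{A}_{11}|$ is preserved while the balance between the curvature term, the constant term, and the torsion term in $R_{\widetilde{g}_{\widetilde{\theta }}}$ shifts. Optimizing over $\lambda >0$ — roughly, choosing $\lambda ^{2}$ comparable to $\max_{M}|A_{11}|^{1/2}$ — together with the uniform lower bound $W-|A_{11}|\geq \delta $ lets me drive $R_{\widetilde{g}_{\widetilde{\theta }}}>0$ uniformly on $M$.

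\textbf{Main obstacle.} The delicate point is that $C_{0}=\tfrac{1}{2}$ must appear as the \emph{sharp} threshold, since closed contact $3$-manifolds such as $T^{3}$ admit no PSC metric at all and so must be excluded by the pinching. Getting the coefficients $a,b,c$ correct in our convention, and then checking that the rescaling optimization is tight at the pinching $W>|A_{11}|$ (and not under anything weaker), will be the bulk of the work. A secondary, technical worry is that $\lambda $ must be chosen \emph{globally} rather than pointwise, so I need to use compactness plus the uniform gap $\delta $ to convert pointwise positivity of the optimized expression into a single global choice of $\lambda $; this is where I expect the compact-manifold hypothesis to be essential.
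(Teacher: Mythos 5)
Your Part I is correct and is essentially the paper's argument: the paper fixes $x^{1}=1+si$, writes $A_{11}=a+bi$, and maximizes the rational function $f(s)=-4\frac{as+b}{1+s^{2}}+2b$ over $s\in\mathbb{R}$ to find that the extremal value of the torsion term over all directions is exactly $\pm 2|A_{11}|$; your trigonometric parametrization reaches the same conclusion a bit more cleanly (and without the paper's implicit omission of the purely imaginary direction). No issue there.

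Part II has a genuine gap: the key computation is not carried out, and the shape you guess for the scalar curvature is wrong in a way that breaks the logic. The paper uses the family $g_{\lambda}=d\theta+\lambda^{-2}\theta^{2}$ (equivalent to your constant rescaling of $\theta$) and quotes from Chang--Chiu the formula
\begin{equation*}
R^{\lambda}=4W-2\lambda^{2}|A_{11}|^{2}-2\lambda^{-2},
\end{equation*}
in which the scale-dependent constant term is \emph{negative}, $-2\lambda^{-2}$, not a positive $+b$ as in your ansatz $aW+b-c|A_{11}|^{2}$. This sign is the whole point: writing $\mu=\lambda^{2}$, positivity of $R^{\lambda}$ for some $\mu>0$ is the statement that the quadratic $2|A_{11}|^{2}\mu^{2}-4W\mu+2$ takes a negative value, which by the discriminant is exactly $W^{2}>|A_{11}|^{2}$, i.e.\ the pinching $W>|A_{11}|$ from Part I with $C_{0}=\frac12$. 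With your guessed $+b>0$ the conclusion would follow for \emph{every} closed CR $3$-manifold by sending the scale to infinity, contradicting your own (correct) observation that the threshold must be sharp. So until the comparison of the Levi--Civita and Tanaka--Webster connections is actually done with the right signs, the proposal does not establish the second claim; once it is done, it collapses onto the paper's discriminant argument. One further caveat you share with the paper: the admissible interval of $\mu$ varies from point to point, and the passage to a single global $\lambda$ is asserted rather than proved in the paper as well; your instinct that compactness and the uniform gap $W-|A_{11}|\geq\delta$ must be invoked here is the right one, but this step also needs to be written out.
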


We divide the proof into two lemmas. The first lemma contains results more
than the theorem states.

\begin{lemma}
\label{c41} Let $M$ be a closed CR $3$-manifold. For any $C_0\geq 0$, $C_0$%
-positive curvature condition is equivalent to the pinching condition $%
W(x)>2C_{0}|A_{11}|(x)$ for all $x\in M$. Similarly, $C_0$-negative
curvature condition is equivalent to $W(x)<-2C_{0}|A_{11}|(x)$ for all $x\in
M$.
\end{lemma}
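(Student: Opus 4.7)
The plan is to reduce the defining $C_{0}$-positivity inequality, which involves a nonzero complex vector $X\in T_{1,0}(M)$, to a pointwise scalar inequality by a direct computation in polar coordinates. Fix a point $x\in M$ at which $A_{11}(x)\neq 0$ (the case $A_{11}(x)=0$ being trivial, since then the inequality reduces at $x$ to $Wr^{2}>0$, i.e.\ $W(x)>0=2C_{0}|A_{11}|(x)$). For $X=x^{1}Z_{1}$ write $x^{1}=re^{i\theta}$ with $r>0$, and $A_{11}(x)=|A_{11}|(x)\,e^{i\phi}$.

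Substituting into (\ref{WTor}), the Tanaka-Webster term is simply $Wr^{2}$, while the torsion contribution collapses by conjugate symmetry to
\begin{equation*}
C_{0}\,i\bigl(A_{\bar{1}\bar{1}}x^{\bar{1}}x^{\bar{1}}-A_{11}x^{1}x^{1}\bigr)=2C_{0}|A_{11}|\,r^{2}\sin(\phi+2\theta).
\end{equation*}
Consequently
\begin{equation*}
(W+C_{0}Tor)(X,X)=r^{2}\bigl(W+2C_{0}|A_{11}|\sin(\phi+2\theta)\bigr).
\end{equation*}
Since $T_{1,0}(M)_{x}$ is a one-dimensional complex line, the phase $\theta$ ranges freely over $\mathbb{R}$ as $X$ varies over the nonzero vectors in the fiber, so $\sin(\phi+2\theta)$ attains every value in $[-1,1]$. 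Positivity of the above expression for every such $X$ is therefore equivalent to its minimum being positive, i.e.\ $W(x)-2C_{0}|A_{11}|(x)>0$; the analogous maximum argument shows that $C_{0}$-negativity at $x$ is equivalent to $W(x)+2C_{0}|A_{11}|(x)<0$. Since these reductions are pointwise, varying $x$ over the closed manifold $M$ yields both of the asserted global equivalences.

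There is no substantial obstacle here; the argument is essentially the minimization of an affine-plus-sine expression over a circle. The only bookkeeping point is that the scalar $|A_{11}|(x)$ really is intrinsic, since the frame $Z_{1}$ with $h_{1\bar{1}}=1$ is determined only up to multiplication by a unit complex scalar: under $Z_{1}\mapsto e^{i\alpha}Z_{1}$ one has $A_{11}\mapsto e^{-2i\alpha}A_{11}$, so $|A_{11}|$ is gauge-independent and the stated pinching conditions $W\gtrless \pm 2C_{0}|A_{11}|$ are unambiguous.
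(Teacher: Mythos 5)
Your proposal is correct and takes essentially the same route as the paper: a pointwise reduction to an optimization over the direction of $X$. The paper normalizes $X=(1+si)Z_{1}$ and maximizes the rational function $f(s)=-4(as+b)/(1+s^{2})+2b$ (where $A_{11}=a+bi$), finding the critical value $2\sqrt{a^{2}+b^{2}}=2|A_{11}|$; your polar parametrization $x^{1}=re^{i\theta}$ turns the same quantity into $W+2C_{0}|A_{11}|\sin(\phi+2\theta)$ and optimizes a sine, which is a cleaner way to see that the extremal values are $W\mp 2C_{0}|A_{11}|$.
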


\begin{proof}
Fix a point $x\in M$ and denote $A_{11}(x)=a(x)+b(x)i$. Without loss of
generality, one may consider $X=x^{1}Z_1=(1+si)Z_1$ for some $s\in \mathbb{R}
$. The positivity condition (\ref{WTor}) reads as 
\begin{align*}
W(1+si)(1-si)>&\ -C_0i[(a-bi)(1-si)^2-(a+bi)(1+si)^2] \\
=&\ -C_0[ 4as+2b-2bs^2 ],
\end{align*}
i.e., 
\begin{equation}
W> -4C_0\frac{as+b}{1+s^{2}}+2C_0b.  \label{41b}
\end{equation}
Since $X$ is arbitrary, this inequality holds for all $s\in \mathbb{R}$.
Denote $f(s)=-4\frac{as+b}{1+s^{2}}+2b$. When $a\neq 0$, we have 
\begin{equation}
f(s)\leq \max_{s\in\mathbb{R}} f(s)=f(s_0)=2\sqrt{a^{2}+b^{2}},  \label{41a}
\end{equation}
where $s_{0}=\frac{b+\sqrt{a^{2}+b^{2}}}{-a}$ is a critical number of $f$.
On the other hand, it is easy to see that $f(s)\leq 2|b|$ when $a=0$. 
All these imply $f(s)\leq 2 |A_{11}|$ for all $s\in \mathbb{R}$ and thus $%
W(x)>2C_0|A_{11}|(x)$ for all $x\in M$. Therefore, $C_0$-positive is
equivalent to $W(x)>2C_{0}|A_{11}|(x)$.

Similarly, at the minimum point $s_{1}=\frac{b-\sqrt{a^{2}+b^{2}}}{-a}(a\neq
0),$ we have 
\begin{equation}
f(s)\geq \min_{s\in\mathbb{R}}f(s)=f(s_1)=-2\sqrt{a^{2}+b^{2}}.  \label{42a}
\end{equation}%
Then it follows analogously that $C_0$-negative curvature condition is
equivalent to $W(x)<-2C_{0}|A_{11}|(x)$.
\end{proof}

\begin{remark}
Note that $C_0$-positivity can also be expressed as 
\begin{equation*}
(W-C_{0}Tor)(Y,Y)>0
\end{equation*}%
for all $0\neq Y=y^{1}Z_{1}\in T_{1,0}(M)$ with $y^{1}=ix^{1}$. This is to
say that $C_0$-positive actually implies $W>C_0 |Tor|$, although this does
not imply $W>2C_0|A_{11}|$ literally without using the lemma above.
\end{remark}

Next, it is interesting to know the obstruction for the $C_0$-positivity
when $C_0\geq \frac{1}{2}$. The following lemma completes the proof of
Theorem \ref{tB}.

\begin{lemma}
\label{l42} Let $(M^{3},J,\theta )$ be a closed CR $3$-manifold with $%
W(x)>\left\vert A_{11}(x)\right\vert$. Then $M$ admits a Riemannian metric
of positive scalar curvature.
\end{lemma}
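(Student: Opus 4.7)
I propose to construct a Riemannian metric with positive scalar curvature on $M$ by taking the Webster (adapted) metric $g_{\theta} := \theta \otimes \theta + L_{\theta}$ (extending $L_{\theta}$ to all of $TM$ by declaring $T \perp \xi$) and estimating its scalar curvature. In an adapted orthonormal frame $\{T, e_1, e_2 = Je_1\}$, comparing the Levi-Civita and Tanaka--Webster connections via their explicit difference tensor (which involves $J$, $T$, and the torsion $A$), a direct computation yields a pointwise identity of the schematic form
\[
R_{g_{\theta}} \;=\; 2W \;-\; c_1 |A_{11}|^{2} \;-\; c_2,
\]
for explicit positive constants $c_1, c_2$ (this is essentially Tanno's formula in dimension three; as sanity checks, it recovers $R = 7/2$ for the adapted metric on the standard CR three-sphere, where $W$ is constant and $A_{11} \equiv 0$, and $R = -1/2$ for the Heisenberg nilmanifold with $W = A_{11} = 0$).

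Given the hypothesis $W > |A_{11}|$ on the compact $M$, we have $W > 0$ everywhere, hence $\min_M W > 0$ and $\max_M |A_{11}|$ is finite. The inequality $R_{g_{\theta}} > 0$ amounts pointwise to $2W > c_1 |A_{11}|^2 + c_2$. Where $|A_{11}|$ is small (say, bounded by a threshold depending only on $c_1, c_2$), this follows from $2W > 2|A_{11}|$ directly. To enforce such a threshold uniformly across $M$, I would pass to a CR-conformal modification $\tilde\theta = e^{2u}\theta$, where $u$ solves an elliptic equation (a Yamabe-type PDE on $M$) designed to suppress the torsion contribution; the solvability is guaranteed by the positivity of the Webster Yamabe constant, which in turn follows from $W > 0$. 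The transformation laws for $\tilde W$ and $\tilde A_{11}$ under this CR-conformal change allow one to verify that the modified Webster metric $g_{\tilde\theta}$ has everywhere-positive scalar curvature.

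The principal obstacle is that the linear hypothesis $W > |A_{11}|$ does not directly dominate the quadratic penalty $|A_{11}|^2$ in the scalar-curvature identity. Closing the argument therefore requires the elliptic step outlined above, in which one exploits the compactness of $M$, the strict inequality $W - |A_{11}| > 0$, and the well-developed conformal theory of closed pseudohermitian three-manifolds to absorb the torsion-squared term. Verifying the exact coefficients $c_1, c_2$ in Tanno's formula with the sign conventions of the paper, and confirming that the CR-conformal normalization preserves the pinching $\tilde W > |\tilde A_{11}|$, are the remaining technical points.
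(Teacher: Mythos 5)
Your proposal correctly identifies the difficulty --- the hypothesis $W>|A_{11}|$ is linear in the torsion while the scalar curvature of the adapted metric penalizes $|A_{11}|^{2}$ and also carries a negative constant term --- but the step you propose to overcome it is exactly the part that is missing, and it is unlikely to work as described. A CR-conformal change $\tilde{\theta}=e^{2u}\theta$ alters the torsion by second-derivative terms of $u$ (schematically $\tilde{A}_{11}=e^{-2u}(A_{11}+2iu_{11}-4iu_{1}u_{1})$), so an elliptic equation chosen to normalize $\tilde{W}$ (the CR Yamabe problem) gives no control whatsoever on $\tilde{A}_{11}$; there is no mechanism in the conformal theory that ``suppresses the torsion contribution,'' and you give none. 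As written, the argument reduces the lemma to an unproved (and probably false, in the generality you state it) analytic claim.

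The idea you are missing is much more elementary: instead of conformally rescaling $\theta$, rescale the metric in the Reeb direction. The paper uses the one-parameter family of adapted metrics $g_{\lambda}=d\theta+\lambda^{-2}\theta^{2}$, whose scalar curvature (by the Chang--Chiu computation) is
\begin{equation*}
R^{\lambda}=4W-2\lambda^{2}|A_{11}|^{2}-2\lambda^{-2}.
\end{equation*}
Setting $\mu=\lambda^{2}$, positivity of $R^{\lambda}$ at a point is the statement that the quadratic $2|A_{11}|^{2}\mu^{2}-4W\mu+2$ takes a negative value for some $\mu>0$, and its discriminant is $16(W^{2}-|A_{11}|^{2})$; so the admissibility condition is exactly the hypothesis $W>|A_{11}|$ (which also forces $W>0$, putting the good range of $\mu$ on the positive axis). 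No PDE and no conformal change are needed. Your fixed-$\lambda$ computation corresponds to freezing $\mu=1$, which is why you are left fighting the constant term and the quadratic penalty by hand. (One residual point, present in the paper as well, is that the admissible interval of $\mu$ varies from point to point, so choosing a single constant $\lambda$ on all of $M$ requires an additional compactness/pinching argument or allowing $\lambda$ to be a function; but that is a refinement of the correct mechanism, not a substitute for it.)
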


\begin{proof}
We recall that the Webster (adapted) Riemannian metric on $M$ is defined by 
\begin{equation*}
g_{\lambda }=d\theta +\lambda ^{-2}\theta ^{2}
\end{equation*}
for any parameter $\lambda >0.$ \ Now it follows from the paper by Chang and
Chiu (\cite{cchi}) that the Ricci curvature $R_{ij}^{\lambda }$ with respect
to $g_{\lambda }$ is 
\begin{equation*}
\begin{split}
R_{11}^{\lambda }& =2W-2\lambda ^{-2}-2i\lambda ^{2}\mathrm{Im}A_{\bar{1}%
\bar{1}}\theta _{1}{}^{1}(T)+2\mathrm{Im}A_{\bar{1}\bar{1}}-\lambda ^{2}%
\mathrm{Re}A_{\bar{1}\bar{1},0} \\
R_{22}^{\lambda }& =2W-2\lambda ^{-2}+2i\lambda ^{2}\mathrm{Im}A_{\bar{1}%
\bar{1}}\theta _{1}{}^{1}(T)-2\mathrm{Im}A_{\bar{1}\bar{1}}+\lambda ^{2}%
\mathrm{Re}A_{\bar{1}\bar{1}},_{0} \\
R_{33}^{\lambda }& =-2\lambda ^{2}\left\vert A_{\bar{1}\bar{1}%
}\right\vert^{2}+2\lambda ^{-2} \\
R_{12}^{\lambda }& =2i\lambda ^{2}\mathrm{Re}A_{\bar{1}\bar{1}%
}\theta_{1}{}^{1}(T)-2\mathrm{Re}A_{\bar{1}\bar{1}}-\lambda ^{2}\mathrm{Im}%
A_{\bar{1}\bar{1},0} \\
R_{13}^{\lambda }& =2\lambda \mathrm{Re}A_{11,\bar{1}},\text{ }%
R_{23}^{\lambda}=-2\lambda \mathrm{Im}A_{11,\bar{1}}.
\end{split}%
\end{equation*}%
and then the scalar curvature is 
\begin{equation*}
R^{\lambda }=4W-2\lambda ^{2}\left\vert A_{11}\right\vert ^{2}-2\lambda^{-2}.
\end{equation*}

We want to find $\lambda$ so that $R$ is positive. This is not always
solvable. For instance, when $W=|A_{11}|=0$, $R^{\lambda}$ is always
negative. So we need to find an admissible condition on $W$ and $|A_{11}|$.
Let $\mu=\lambda^2$. Then $R^{\lambda}>0$ if and only if the quadratic
polynomial $2|A_{11}|^2\mu^2-4W\mu+2$ attains negative values, i.e., the
coefficients should satisfy 
\begin{equation*}
0> (4W)^2-4(2|A_{11}|^2 )(2)= 16(W^2-|A_{11}|^2).
\end{equation*}
It follows that if $W>\left\vert A_{11}\right\vert$, then there is a
positive constant $\lambda$ such that $R^{\lambda }>0$. Although $\lambda$
varies pointwisely, there exists a uniform $\lambda>0$ on the closed
manifold $M$. This completes the proof.
\end{proof}


\section{CR Harnack Quantity}


For CR manifolds, we have the concept of CR Yamabe solitons (\ref{2A}) which
are self-similar solutions to the CR Yamabe flow on a pseudohermitian $(2n+1)$%
-manifold. 
Similarly, one can introduce CR torsion solitons which correspond to
self-similar solutions to the CR torsion flow (\ref{0}) on a pseudohermitian 
$(2n+1)$-manifold.

\begin{definition}
A pseudohermitian $(2n+1)$-manifold $(M,\xi ,J,\theta )$, with CR structure $%
J$ and compatible contact form $\theta $, is called a CR torsion soliton if
there exist an infinitesimal contact diffeomorphism $X$ and a constant $\mu
\in {\mathbb{R}}$ such that 
\begin{equation}
\left\{ 
\begin{array}{l}
W\theta +\frac{1}{2}L_{X}\theta =\mu \theta , \\ 
L_{X}J=2A_{J,\theta },%
\end{array}%
\right.  \label{1A}
\end{equation}%
where $W$ is the Tanaka-Webster scalar curvature of $(M,\xi ,J,\theta )$ and 
$L_{X}$ denotes Lie derivative by $X$. It is called \textit{shrinking} if $%
\mu >0$, \textit{steady} if $\mu =0$, and \textit{expanding} if $\mu <0$. In
particular, it is called a CR Yamabe soliton (\cite{ccc}) if 
\begin{equation}
\left\{ 
\begin{array}{l}
W\theta +\frac{1}{2}L_{X}\theta =\mu \theta , \\ 
L_{X}J=0,%
\end{array}%
\right.  \label{2A}
\end{equation}%
where $J$ is invariant under the contact diffeomorphism.
\end{definition}

\begin{lemma}
\label{l31} A quintuple $(M^{3},J,\theta ,f,\mu )$ is a three-dimensional CR
torsion soliton if%
\begin{equation}
\left\{ 
\begin{array}{l}
W+\frac{1}{2}f_{0}=\mu , \\ 
f_{11}+iA_{11}f=-A_{11}.%
\end{array}%
\right.  \label{1a}
\end{equation}
\end{lemma}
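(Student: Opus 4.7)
The plan is to rewrite the vector-field formulation (\ref{1A}) of a CR torsion soliton as the scalar system (\ref{1a}) using the contact Hamiltonian $f := \theta(X)$. Since $X$ is an infinitesimal contact diffeomorphism it preserves $\xi = \ker\theta$, and on a pseudohermitian $3$-manifold any such $X$ is uniquely determined by $f$. Writing $X = fT + a^{1}Z_{1} + a^{\bar{1}}Z_{\bar{1}}$ with $a^{\bar{1}}=\overline{a^{1}}$, Cartan's formula together with $d\theta = i\theta^{1}\wedge\theta^{\bar{1}}$ gives
\[
L_{X}\theta \;=\; df + \iota_{X}d\theta \;=\; f_{0}\theta + (f_{1}-ia^{\bar{1}})\theta^{1} + (f_{\bar{1}}+ia^{1})\theta^{\bar{1}}.
\]
Requiring $L_{X}\theta$ to be a multiple of $\theta$ forces $a^{1}=if^{1}$ and $a^{\bar{1}}=-if^{\bar{1}}$, so
\[
X \;=\; fT + if^{1}Z_{1} - if^{\bar{1}}Z_{\bar{1}}, \qquad L_{X}\theta = f_{0}\theta.
\]
Substituting into the first equation of (\ref{1A}) yields $W+\tfrac{1}{2}f_{0}=\mu$, which is the first equation of (\ref{1a}).

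For the second equation I would translate $L_{X}J = 2A_{J,\theta}$. Regarding $J$ as a section of $\mathrm{End}(\xi)$ with $JZ_{1}=iZ_{1}$, one has $(L_{X}J)(Z_{1}) = [X,JZ_{1}]_{\xi} - J[X,Z_{1}]_{\xi}$, where $[\,\cdot\,]_{\xi}$ denotes the projection onto $\xi$ along $T$. The combination $iV - JV$ kills any $(1,0)$-vector, so only the $Z_{\bar{1}}$-component of $[X,Z_{1}]$ contributes. Using the structure equations (\ref{id10})--(\ref{id11}), namely $[T,Z_{1}] = \theta_{1}{}^{1}(T)Z_{1} - A^{\bar{1}}{}_{1}Z_{\bar{1}}$ and $[Z_{\bar{1}},Z_{1}] \equiv iT \pmod{\xi}$, together with the second-order identity $f_{11} = Z_{1}(f_{1}) - \theta_{1}{}^{1}(Z_{1})f_{1}$, a direct expansion of $[X,Z_{1}]$ produces (after projecting to $Z_{\bar{1}}$) a multiple of $f_{11}+iA_{11}f$. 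Matching this against $2A_{J,\theta}Z_{1}$, which is the corresponding multiple of $A_{11}Z_{\bar{1}}$ coming from the $T_{1,0}\to T_{0,1}$ part of the torsion endomorphism, yields the identity $f_{11}+iA_{11}f = -A_{11}$. The reverse implication is obtained identically: given $f$ satisfying (\ref{1a}), define $X$ by the explicit formula above and retrace the computation.

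The main obstacle is the bookkeeping in the second step. Several connection-$1$-form and torsion terms appear in the expansion of $[X,Z_{1}]$, including contributions from $f^{1}\theta_{1}{}^{1}(Z_{1})$, from $f\cdot A^{\bar{1}}{}_{1}$, and from the covariant Reeb derivatives acting on the $f$-dependent coefficients. One must invoke the precise form of (\ref{id11}) and the conventions $\theta_{\bar{1}}{}^{\bar{1}} = -\theta_{1}{}^{1}$ and $\nabla T=0$ so that all of these terms combine cleanly into $f_{11}+iA_{11}f$ on the left, with the sole torsion term $-A_{11}$ on the right arising from the source term $2A_{J,\theta}$ in (\ref{1A}).
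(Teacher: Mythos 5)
Your proposal is correct and its overall strategy coincides with the paper's: identify the infinitesimal contact diffeomorphism $X$ with its contact Hamiltonian $f=\theta(X)$, so that $X=fT+if^{1}Z_{1}-if^{\bar 1}Z_{\bar 1}$ and $L_{X}\theta=f_{0}\theta$, and then read off the two scalar equations from (\ref{1A}). The difference is in how the second equation is obtained. The paper does not compute $L_{X}J$ by hand; it quotes Gray's lemma \cite{g}, which states both the formula for $X_{\widetilde f}$ and the identity $L_{X_{\widetilde f}}J\equiv 2(\widetilde f_{\alpha\alpha}+iA_{\alpha\alpha}\widetilde f)\theta^{\alpha}\otimes Z_{\bar\alpha}+\mathrm{c.c.}\ (\mathrm{mod}\ \theta)$, and then passes through the self-similar-solution interpretation $\theta(t)=\rho(t)\Phi_{t}^{*}\theta(0)$, $J(t)=\Phi_{t}^{*}J(0)$ to match signs via $f=-\widetilde f$. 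You instead propose to rederive Gray's formula directly from Cartan's formula and the structure equations (\ref{id10})--(\ref{id11}); your outline is sound, and the bookkeeping you worry about does close up: since $X$ is an infinitesimal contact diffeomorphism, $[X,Z_{1}]$ lies in $\xi$, its $\theta^{\bar 1}$-component is $if_{11}-fA_{11}$ (the connection terms $\theta_{1}{}^{1}(Z_{1})f_{1}$ assembling exactly into the covariant derivative $f_{11}$, and the torsion term coming from $[fT,Z_{1}]$), whence $(L_{X}J)(Z_{1})=-2(f_{11}+iA_{11}f)Z_{\bar 1}$, and equating with $2A_{J,\theta}(Z_{1})=2A_{11}Z_{\bar 1}$ gives $f_{11}+iA_{11}f=-A_{11}$ with the correct sign. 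So your route is more self-contained (no appeal to \cite{g} and no detour through self-similar solutions), at the cost of the explicit Lie-bracket computation, which you have only sketched but correctly.
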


\begin{proof}
We first recall a result from \cite{g}.

\begin{lemma}
Let $\left( M^{2n+1},J,\theta \right) $ be a pseudohermitian $(2n+1)$%
-manifold. For any smooth function $\widetilde{f}$ on $M$, let $X_{%
\widetilde{f}}$ be the vector field uniquely defined by 
\begin{equation*}
X_{\widetilde{f}}\rfloor \ d\theta =d\widetilde{f}\text{ mod }\theta\hspace{%
8mm} \mbox{ and } \hspace{8mm} X_{\widetilde{f}}\rfloor \theta =-\widetilde{f%
}.  \label{41c}
\end{equation*}
Then $X_{\widetilde{f}}=i\widetilde{f}_{\alpha }Z_{\bar{\alpha}}-i\widetilde{%
f}_{\bar{\alpha}}Z_{\alpha }-\widetilde{f}\mathbf{T}$ and it is a smooth
infinitesimal contact diffeomorphism of $\left(M^{2n+1},\theta \right) $.
Conversely, every smooth infinitesimal contact diffeomorphism is of the form 
$X_{\widetilde{f}}$ for some smooth function $\widetilde{f}$. Moreover, $%
L_{X_{\widetilde{f}}}J$ has the following expression 
\begin{equation*}
L_{X_{\widetilde{f}}}J\equiv 2(\widetilde{f}_{\alpha \alpha }+iA_{\alpha
\alpha }\widetilde{f})\theta ^{\alpha }\otimes Z_{\bar{\alpha}}+2(\widetilde{%
f}_{\bar{\alpha}\bar{\alpha}}-iA_{\bar{\alpha}\bar{\alpha}}\widetilde{f}%
)\theta ^{\bar{\alpha}}\otimes Z_{\alpha }\ \mathrm{mod}\ \theta .
\end{equation*}
\end{lemma}

Next, we relate CR torsion solitons and self-similar solutions to the CR
torsion flow. A special class of solutions to the CR torsion flow (\ref{0})
is given by self-similar solutions, whose contact forms $\theta _{t}$ deform
under the CR Yamabe flow only by a scaling function depending on $t$ 
\begin{equation}
\theta(t):=\rho (t)\Phi _{t}^{\ast }\theta(0),\ \ \ \rho (t)>0,\ \ \ \rho
(0)=1  \label{2BB}
\end{equation}%
and 
\begin{equation}
J(t)=\Phi _{t}^{\ast }(J(0)),  \label{1BB}
\end{equation}%
where $\Phi _{t}:M\rightarrow M$ is a one-parameter family of contact
diffeomorphisms generated by a CR vector field $X_{\widetilde{f}}$ as above
on $M$ with $\Phi _{0}=id_{M}$.

As in the paper \cite{ccc}, it follows from (\ref{2BB}) and $\partial_t
\theta =-2W\theta$ that we put $f=-\widetilde{f}$ and denote $\mu =-\frac{1}{%
2}\rho ^{\prime }(0),$ then we have 
\begin{equation*}
W+\frac{1}{2}f_{0}=\mu .
\end{equation*}%
On the other hand, it follows from (\ref{1BB}) that 
\begin{equation*}
\frac{\partial }{\partial t}J(t)=\frac{\partial }{\partial t}\Phi
_{t}^{\ast}(J(0))=L_{X_{\widetilde{f}}}J(t)
\end{equation*}%
and then 
\begin{equation*}
A=(\widetilde{f}_{\alpha \alpha }+iA_{\alpha \alpha }\widetilde{f})\theta
^{\alpha }\otimes Z_{\bar{\alpha}}+(\widetilde{f}_{\bar{\alpha}\bar{\alpha}%
}-iA_{\bar{\alpha}\bar{\alpha}}\widetilde{f})\theta ^{\bar{\alpha}}\otimes
Z_{\alpha }.
\end{equation*}%
Hence, when $n=1$, we obtain 
\begin{equation*}
f_{11}+iA_{11}f=-A_{11}
\end{equation*}%
for $f=-\widetilde{f}$ and $n=1$. We refer to \cite{ccc} for more details.
\end{proof}

To prove Theorem 1.2, we shall need a certain differential Harnack quantity
for the three-dimensional CR Yamabe or torsion flow. For the Ricci flow,
Hamilton found a conserved quantity which vanishes identically for expanding
Ricci solitons and showed that such a quantity is nonnegative for 
solutions to the Ricci flow with positive curvature operator. This quantity is called the
differential Harnack quantity, or LYH quantity, because one can obtain the
Harnack inequality for the evolving scalar curvature from it. Recall that
the authors have derived a CR Harnack quantity for CR Yamabe soliton in \cite%
{ccc}: 
\begin{equation}
4\Delta _{b}W+2W(W-\mu )+\langle \nabla _{b}W,X_{f}\rangle +\langle W_{0}%
\mathbf{T},X_{f}\rangle =0 .  \label{3}
\end{equation}
In this paper, we obtain the CR Harnack quantity for the CR torsion flow (%
\ref{0}).

\begin{theorem}
\label{t42} A three-dimensional CR torsion soliton satisfies the following
CR Harnack quantity%
\begin{equation}
4\Delta _{b}W+2W(W-\mu )+\langle \nabla _{b}W,X_{f}\rangle +\langle W_{0}%
\mathbf{T},X_{f}\rangle -i(A_{11,\bar{1}\bar{1}}-A_{\bar{1}\bar{1}%
,11})-2|A_{11}|^{2}=0.  \label{2}
\end{equation}%
That is, 
\begin{equation*}
3\Delta _{b}W+2W(W-\mu )+\langle \nabla _{b}W,X_{f}\rangle +\langle W_{0}%
\mathbf{T},X_{f}\rangle -2Q-2|A_{11}|^{2}=0.
\end{equation*}%
Here $Q$ is the CR $Q$-curvature (see Section $6$).
\end{theorem}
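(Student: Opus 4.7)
The plan is to follow the derivation of the CR Harnack quantity (\ref{3}) for CR Yamabe solitons from \cite{ccc}, carefully tracking the extra contributions that arise from the modified second soliton equation $f_{11}+iA_{11}f=-A_{11}$ in place of $f_{11}+iA_{11}f=0$. The key observation is that the torsion case differs from the Yamabe case by an inhomogeneous term $-A_{11}$ on the right-hand side, and this inhomogeneity will propagate through the computation to produce exactly the extra terms in (\ref{2}).

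First I would start from the scaling equation $W+\tfrac{1}{2}f_{0}=\mu$, differentiate it in the horizontal directions, and apply the sub-Laplacian $\Delta_{b}$ to obtain $\Delta_{b}W=-\tfrac{1}{2}\Delta_{b}f_{0}$. Using the CR commutation relations for a scalar function (in particular $f_{1\bar 1}-f_{\bar 1 1}=if_{0}$ together with the third-order commutators that bring in $W$ and $A_{11}$), I would trade the mixed derivatives $f_{0,1\bar 1}$ and $f_{0,\bar 1 1}$ for horizontal hessian components $f_{11},\,f_{\bar 1\bar 1},\,f_{1\bar 1}$ and their derivatives. Contraction against $X_{f}=if_{1}Z_{\bar 1}-if_{\bar 1}Z_{1}-f\mathbf{T}$ will generate the inner-product terms $\langle \nabla_{b}W,X_{f}\rangle$ and $\langle W_{0}\mathbf{T},X_{f}\rangle$ already present in (\ref{3}), together with the quadratic piece $2W(W-\mu)$ coming from repeated use of the first soliton equation.

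Next I would substitute the torsion-soliton equation and its conjugate, $f_{11}=-iA_{11}f-A_{11}$ and $f_{\bar 1\bar 1}=iA_{\bar 1\bar 1}f-A_{\bar 1\bar 1}$, into the resulting identity. In the Yamabe setting of \cite{ccc} only the term $-iA_{11}f$ is present, yielding exactly (\ref{3}); the additional $-A_{11}$ and $-A_{\bar 1\bar 1}$ contribute two types of new terms. First, quadratic torsion pieces $|A_{11}|^{2}$ arise from cross-products like $iA_{\bar 1\bar 1}\cdot f_{11}=iA_{\bar 1\bar 1}(-iA_{11}f-A_{11})$. Second, second-horizontal-derivative pieces $A_{11,\bar 1\bar 1}$ and $A_{\bar 1\bar 1,11}$ appear upon applying $Z_{\bar 1}Z_{\bar 1}$ to $f_{11}=-iA_{11}f-A_{11}$ (and its conjugate with $Z_{1}Z_{1}$). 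Taking the combination with the complex conjugate isolates the imaginary part $i(A_{11,\bar 1\bar 1}-A_{\bar 1\bar 1,11})$ together with the real coefficient $-2|A_{11}|^{2}$ that appear in (\ref{2}).

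The main obstacle I foresee is the careful bookkeeping of commutator terms in the first step: the third-order CR commutators for a scalar function introduce coupling with both $W$ and $A_{11}$, so one must ensure that all curvature coupling reorganizes into exactly the terms $4\Delta_{b}W$ and $2W(W-\mu)$ of (\ref{3}), and that the new torsion terms produced in the second step do not partially cancel against these commutator contributions but instead survive as the clean expression stated. For the alternative form in terms of the CR $Q$-curvature, one finally invokes the three-dimensional identity relating $Q$ to $\Delta_{b}W$ and $\mathrm{Im}\, A_{11,\bar 1\bar 1}$ (see Section $6$), which converts the coefficient of $\Delta_{b}W$ from $4$ to $3$ and absorbs the imaginary torsion combination into the term $-2Q$.
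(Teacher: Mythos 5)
Your proposal matches the paper's proof in both strategy and detail: the paper likewise differentiates $W+\tfrac12 f_{0}=\mu$, reduces $f_{1\bar 1 1\bar 1}$ and $f_{\bar 1 1 1\bar 1}$ via the commutation relations and the inhomogeneous equation $f_{11}+iA_{11}f=-A_{11}$, obtains the $2|A_{11}|^{2}$ term from the $A_{11}f_{\bar 1\bar 1}$, $A_{\bar 1\bar 1}f_{11}$ cross terms in the $f_{\bar 1 1 0}$ commutator, and uses the Bianchi identity $A_{11,\bar 1\bar 1}+A_{\bar 1\bar 1,11}=W_{0}$ (implicit in your appeal to the Yamabe computation) to assemble $\langle W_{0}\mathbf{T},X_{f}\rangle$. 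The plan is correct and essentially identical to the paper's argument.
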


\begin{proof}
Recall that $\Delta _{b}W=W_{1\bar{1}}+W_{\bar{1}1}$. We first differentiate
the soliton equation (\ref{1B}) and obtain 
\begin{equation*}
W_{1\bar{1}}=-\frac{1}{2}f_{01\bar{1}}=\frac{i}{2}(f_{1\bar{1}}-f_{\bar{1}%
1})_{1\bar{1}}.
\end{equation*}%
The two terms appear on the right hand side are higher derivatives of $f$
which can be reduced by using 
\begin{equation*}
f_{11}+iA_{11}f=-A_{11}.
\end{equation*}%
Indeed, by using commutation relations, one derives%
\begin{equation*}
\begin{array}{ccl}
f_{1\bar{1}1} & = & f_{11\bar{1}}-if_{10}-f_{1}W \\ 
& = & -A_{11,\bar{1}}-i(A_{11}f)_{\bar{1}}-if_{01}+iA_{11}f_{\bar{1}}-f_{1}W
\\ 
& = & -A_{11,\bar{1}}-iA_{11,\bar{1}}f+2iW_{1}-f_{1}W.%
\end{array}%
\end{equation*}%
Differentiate this in the direction $Z_{\bar{1}}$, one achieves 
\begin{equation*}
f_{1\bar{1}1\bar{1}}=-A_{11,\bar{1}\bar{1}}-iA_{11,\bar{1}\bar{1}}f-iA_{11,%
\bar{1}}f_{\bar{1}}+2iW_{1\bar{1}}-f_{1\bar{1}}W-f_{1}W_{\bar{1}}.
\end{equation*}%
On the other hand, the differentiation of the conjugation in the $Z_{1}$
direction gives an expression of $f_{\bar{1}1\bar{1}1}$. After changing the $%
3$rd and $4$th indices, one obtains 
\begin{equation*}
f_{\bar{1}11\bar{1}}=f_{\bar{1}1\bar{1}1}+if_{\bar{1}10}=-A_{\bar{1}\bar{1}%
,11}+iA_{\bar{1}\bar{1},11}f+iA_{\bar{1}\bar{1},1}f_{1}-2iW_{\bar{1}1}-f_{%
\bar{1}1}W-f_{\bar{1}}W_{1}+if_{\bar{1}10}.
\end{equation*}%
Note that the bad term $f_{\bar{1}10}$ can be rewritten as%
\begin{equation*}
\begin{array}{ccl}
f_{\bar{1}10} & = & f_{0\bar{1}1}-A_{11}f_{\bar{1}\bar{1}}-A_{11,\bar{1}}f_{%
\bar{1}}-A_{\bar{1}\bar{1}}f_{11}-A_{\bar{1}\bar{1},1}f_{1} \\ 
& = & -2W_{\bar{1}1}-A_{11}f_{\bar{1}\bar{1}}-A_{11,\bar{1}}f_{\bar{1}}-A_{%
\bar{1}\bar{1}}f_{11}-A_{\bar{1}\bar{1},1}f_{1} \\ 
& = & 2|A_{11}|^{2}-2W_{\bar{1}1}-A_{11,\bar{1}}f_{\bar{1}}-A_{\bar{1}\bar{1}%
,1}f_{1}.%
\end{array}%
\end{equation*}%
Substituting our expressions for $f_{1\bar{1}1\bar{1}}$ and $f_{\bar{1}11%
\bar{1}}$ into the equation for $W_{1\bar{1}}$, we get%
\begin{equation*}
\begin{array}{ccl}
2W_{1\bar{1}} & = & -i(A_{11,\bar{1}\bar{1}}-A_{\bar{1}\bar{1},11})+A_{11,%
\bar{1}\bar{1}}f+A_{11,\bar{1}}f_{\bar{1}}-2W_{1\bar{1}}-if_{1\bar{1}%
}W-if_{1}W_{\bar{1}} \\ 
&  & +A_{\bar{1}\bar{1},11}f+A_{\bar{1}\bar{1},1}f_{1}-2W_{\bar{1}1}+if_{%
\bar{1}1}W+if_{\bar{1}}W_{1} \\ 
&  & +2|A_{11}|^{2}-2W_{\bar{1}1}-A_{11,\bar{1}}f_{\bar{1}}-A_{\bar{1}\bar{1}%
,1}f_{1}.%
\end{array}%
\end{equation*}%
By the CR Bianchi identity $A_{11,\bar{1}\bar{1}}+A_{\bar{1}\bar{1}%
,11}=W_{0} $, we have the Harnack quantity%
\begin{equation}
\begin{array}{ccl}
4\Delta _{b}W & = & -i(A_{11,\bar{1}\bar{1}}-A_{\bar{1}\bar{1}%
,11})+2|A_{11}|^{2}+(A_{11,\bar{1}\bar{1}}+A_{\bar{1}\bar{1},11})f \\ 
&  & -i(f_{1\bar{1}}-f_{\bar{1}1})W-i(f_{1}W_{\bar{1}}-f_{\bar{1}}W_{1}) \\ 
& = & -i(A_{11,\bar{1}\bar{1}}-A_{\bar{1}\bar{1},11})+2|A_{11}|^{2}+W_{0}f
\\ 
&  & -i(if_{0})W-i(f_{1}W_{\bar{1}}-f_{\bar{1}}W_{1}) \\ 
& = & -i(A_{11,\bar{1}\bar{1}}-A_{\bar{1}\bar{1},11})+2|A_{11}|^{2}+W_{0}f
\\ 
&  & -2W(W-\mu)+\langle \nabla _{b}W,J(\nabla _{b}f)\rangle _{\theta }.%
\end{array}%
\end{equation}

Notice that 
\begin{align*}
\langle \nabla _{b}W+W_{0}\mathbf{T},X_{f}\rangle & =\langle \nabla
_{b}W+W_{0}\mathbf{T},-f\mathbf{T}-if_{\bar{1}}Z_{1}+if_{1}Z_{\bar{1}}\rangle
\\
& =-W_{0}f-\langle \nabla _{b}W,J(\nabla _{b}f)\rangle _{\theta },
\end{align*}
so the proof is completed.
\end{proof}


\section{Classification of CR Solitons}


In the first part of this section, we derive a classification of closed
three-dimensional CR Yamabe solitons according to pseudohermitian curvature
condition. In the second part, we prove the complete classification of
closed three-dimensional CR torsion solitons.

In \cite{ccc}, we use Harnack quantity (\ref{3}) to prove that every closed
three-dimensional CR Yamabe soliton satisfies $\int_M(W-\mu)^2 =0$ and thus
has constant Tanaka-Webster curvature. This is also true for higher dimensional
cases, as proved by P.-T. Ho in \cite{ho}. However, this is not enough
for us to classify the underlying CR manifold due to the lack of information
on CR torsion. We suggest that $C_0$-positivity, which involves both
curvature and torsion, may be a suitable notion for achieving a complete
classification of closed three-dimensional CR Yamabe solitons.

\begin{theoremt41}
\textrm{(i)} Any simply connected closed three-dimensional CR Yamabe soliton
with $(W+Tor)(X,X)>0$ for $0\neq X\in T_{1,0}(M)$ and $P_{0}f=0$ must be the
standard CR three sphere.

\textrm{(ii)} Any closed three-dimensional CR Yamabe soliton with $%
(W+Tor)(X,X)=0$ for $0\neq X\in T_{1,0}(M)$ and $P_{0}f=0$ must be the
standard Heisenberg space form which is a Seifert fiber space over a
euclidean $2$-orbifold with nonzero Euler number.

\textrm{(iii)} Any closed three-dimensional CR Yamabe soliton with $%
(W+Tor)(X,X)<0$ for $0\neq X\in T_{1,0}(M)$ and nonnegative CR Paneitz
operator must be the standard Lorentz space form which is a Seifert fiber
space over a hyperbolic orbifold with nonzero Euler number.
\end{theoremt41}


\begin{proof}
(i) Since $f_{11}+iA_{11}f=0$, by using commutation relations, one derives 
\begin{equation*}
\begin{array}{ccl}
f_{1\bar{1}1} & = & f_{11\bar{1}}-if_{10}-f_{1}W \\ 
& = & -i(A_{11}f)_{\bar{1}}-if_{01}+iA_{11}f_{\bar{1}}-f_{1}W \\ 
& = & -iA_{11,\bar{1}}f+2iW_{1}-f_{1}W.%
\end{array}%
\end{equation*}%
Recall that $W=\mu$, so the term $W_1$ vanishes (see Theorem 1.1 in \cite%
{ccc}). Multiplying both sides by $f_{\bar{1}}$ and integrating them, we
have 
\begin{equation*}
-\int_{M}|f_{1\bar{1}}|^{2}d\mu +i\int_{M}A_{11,\bar{1}}f_{\bar{1}%
}fd\mu+\int_{M}Wf_{\bar{1}}f_{1}d\mu=0.
\end{equation*}
Because 
\begin{align*}
i\int_{M}A_{11,\bar{1}}f_{\bar{1}}fd\mu =& -i\int_{M}A_{11}f_{\bar{1}}f_{%
\bar{1}}d\mu -i\int_{M}A_{11}f_{\bar{1}}{}_{\bar{1}}fd\mu \\
=& -i\int_{M}A_{11}f_{\bar{1}}f_{\bar{1}}d\mu +\int_{M}|A_{11}|^{2}f^{2}d\mu,
\label{C}
\end{align*}
we have 
\begin{equation*}
0=\int_{M}|A_{11}|^{2}f^{2}d\mu -\int_{M}|f_{1\bar{1}}|^{2}d\mu
-i\int_{M}A_{11}f_{\bar{1}}f_{\bar{1}}d\mu +\int_{M}Wf_{\bar{1}}f_{1}d\mu.
\end{equation*}%
Using the soliton equation and the pluriharmonic operator $P_{1}\varphi
=\varphi _{\bar{1}}{}^{\bar{1}}{}_{1}+iA_{11}\varphi ^{1}$, we can break the
second term into 
\begin{align*}  \label{D}
\int_{M}|f_{1\bar{1}}|^{2}d\mu = \int_{M}|f_{\bar{1}1}|^{2}d\mu +i\int_{M}f_{%
\bar{1}1}{}f_{0}d\mu = i\int_{M}A_{11}f_{\bar{1}}f_{\bar{1}}d\mu
-\int_{M}(P_{1}f)f_{\bar{1}}d\mu.
\end{align*}
All these imply%
\begin{equation*}
0=\int_{M}|A_{11}|^{2}f^{2}d\mu+\int_{M}Wf_{\bar{1}}f_{1}d\mu
-2i\int_{M}A_{11}f_{\bar{1}}f_{\bar{1}}d\mu +\int_{M}(P_{1}f)f_{\bar{1}}d\mu.
\end{equation*}%
By taking conjugate and summing with the original equation, we have 
\begin{equation*}
0 = \int_{M}|A_{11}|^{2}f^{2}d\mu+\int_{M}Wf_{\bar{1}}f_{1}d\mu-i%
\int_{M}(A_{11}f_{\bar{1}}f_{\bar{1}}-A_{\bar{1}\bar{1}}f_{1}f_{1})d\mu -%
\frac{1}{2}\int_{M}(P_{0}f)fd\mu,
\end{equation*}
i.e., 
\begin{equation}  \label{intHarYamabe}
0= \int_{M}|A_{11}|^{2}f^{2}d\mu +\int_{M}(W+Tor)((\nabla _{b}f)_{\mathbb{C}%
},(\nabla _{b}f)_{\mathbb{C}})d\mu -\frac{1}{2}\int_{M}(P_{0}f)fd\mu.
\end{equation}

By our assumptions and the identity (\ref{intHarYamabe}), it is easy to see $%
A_{11}=0$. Since a pseudohermitian $3$-manifold $(M,J,\theta )$ of constant
Tanaka-Webster scalar curvature and vanishing pseudohermitian torsion must
be spherical, it follows from a result of Y. Kamishima and T. Tsuboi (\cite{kt})
that one can have a complete classification of such closed spherical
torsion-free CR torsion solitons. We also refer to \cite{t} for closed CR $3$%
-manifolds of Sasakian space forms.

The cases (ii) and (iii) can be similarly derived as (i).
\end{proof}

The condition $P_{0}f=0$ in Theorem \ref{t41} is not very restrictive in the
sense that the kernel of the CR Paneitz operator $P_{0}$ is infinite
dimensional, containing all CR-pluriharmonic functions (Cf.\cite{hi}). For a
closed pseudohermitian $3$-manifold of transverse symmetry, we have $\ker
P_{1}=\ker P_{0}.$ Moreover, these kernels are invariant under rescaling $%
\widetilde{\theta }=e^{2g}\theta $, since $\widetilde{P}_{1}=e^{-3g}P_{1}$
and $\widetilde{P}_{0}=e^{-4g}P_{0}$. Note that CR-pluriharmonic function is
naturally related to holomorphic functions on $\mathbb{C}^{2}$. In fact, let 
$M$ be a hypersurface in $\mathbb{C}^{2}$, i.e., $M=\partial \Omega $ for a
bounded domain $\Omega $ in $\mathbb{C}^{2}$, then for any pluriharmonic
function $u:\mathbf{U}\rightarrow \mathbb{R}$ ($\partial \bar{\partial}u=0$)
with a simply connected $\mathbf{U}\subset \bar{\Omega}$, there exists a
holomorphic function $w$ in $\mathbf{U}$ such that $u=\mathrm{Re}(w)$. Now
define $f:=u|_{M}$, it follows that $f$ is a CR-pluriharmonic function (see
Definition \ref{4}) and thus $P_{0}f=0$.\newline

We have mentioned that every closed three-dimensional CR Yamabe soliton has
constant Tanaka-Webster curvature. This can be proven by integrating the CR
Yamabe Harnack quantity (\ref{3}). For CR torsion solitons, we obtain
similar results by integrating the CR torsion Harnack quantity (\ref{2}).
Therefore we obtain the following classification.

\begin{theoremtA}
Every closed three-dimensional CR torsion soliton (\ref{1B}) has constant
Tanaka-Webster curvature and zero torsion $A_{11}=0$. Therefore, they must
belong to one of the following three cases:

\textrm{(i)} The standard CR spherical space form in case $W=1$.

\textrm{(ii)} The standard Heisenberg space form in case $W=0$.

\textrm{(iii)} The standard Lorentz space form in case $W=-1$.
\end{theoremtA}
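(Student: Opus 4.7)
The plan is to integrate the CR torsion Harnack quantity (\ref{2}) over $M$, combine the resulting identity with algebraic consequences of the second soliton equation to force $A_{11}\equiv 0$ and $W\equiv\mu$, and then invoke the classification of closed Sasakian three-manifolds of constant Webster scalar curvature.

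\textbf{Step 1 (integrating the Harnack identity).} Integrating (\ref{2}) against $d\mu=\theta\wedge d\theta$, the term $4\int_{M}\Delta_{b}W\,d\mu$ drops by the divergence theorem. From the first soliton equation $W+\tfrac{1}{2}f_{0}=\mu$, together with $\int_{M}Tg\,d\mu=0$ for any smooth $g$, one obtains $\int_{M}W\,d\mu=\mu\,\mathrm{Vol}(M)$ and, by integration by parts in the Reeb direction, $\int_{M}W_{0}f\,d\mu=-\int_{M}Wf_{0}\,d\mu=2\int_{M}(W-\mu)^{2}\,d\mu=\int_{M}2W(W-\mu)\,d\mu$. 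Using the rewriting $\langle\nabla_{b}W+W_{0}\mathbf{T},X_{f}\rangle=-W_{0}f-\langle\nabla_{b}W,J(\nabla_{b}f)\rangle_{\theta}$ established inside the proof of Theorem \ref{t42}, the two pure-curvature contributions cancel and what remains is
$$\int_{M}\langle\nabla_{b}W,J(\nabla_{b}f)\rangle_{\theta}\,d\mu\;+\;i\int_{M}\bigl(A_{11,\bar 1\bar 1}-A_{\bar 1\bar 1,11}\bigr)\,d\mu\;+\;2\int_{M}|A_{11}|^{2}\,d\mu\;=\;0. \qquad (\star)$$

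\textbf{Step 2 (exploiting the second soliton equation).} Rewriting the second soliton equation as $f_{11}=-A_{11}(1+if)$ yields the pointwise identity $|f_{11}|^{2}=|A_{11}|^{2}(1+f^{2})$, and $A_{\bar 1\bar 1}f_{11}+A_{11}f_{\bar 1\bar 1}=-2|A_{11}|^{2}$ after adding the conjugate. The plan is then to integrate this last equation and transfer the derivatives off $f$ by two integrations by parts, using the CR Bianchi identity $A_{11,\bar 1\bar 1}+A_{\bar 1\bar 1,11}=W_{0}$ (invoked already in Theorem \ref{t42}) and the commutation $f_{10}=-2W_{1}-A_{11}f_{\bar 1}$, which follows from $f_{0}=2(\mu-W)$ and the standard CR commutator. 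This produces a \emph{second} linear relation among the three integrals appearing in $(\star)$. Subtracting from $(\star)$ should isolate a coercive nonnegative quantity, naturally $\int_{M}|f_{11}|^{2}\,d\mu=\int_{M}|A_{11}|^{2}(1+f^{2})\,d\mu$, whose vanishing forces $A_{11}\equiv 0$ pointwise.

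\textbf{Step 3 (reduction and classification).} With $A_{11}\equiv 0$, the second soliton equation becomes $f_{11}=0$ and the solution reduces to a torsion-free CR Yamabe soliton. The result from \cite{ccc} quoted just before the theorem statement then gives $\int_{M}(W-\mu)^{2}\,d\mu=0$, hence $W\equiv\mu$ is constant. After rescaling $\theta$ so that $W\in\{1,0,-1\}$, the triple $(M,J,\theta)$ is a closed Sasakian three-manifold of constant Webster scalar curvature, and the classification of Kamishima--Tsuboi \cite{kt} together with Tanno's work on Sasakian space forms \cite{t} (both invoked in the proof of Theorem \ref{t41}) identifies $M$ as the standard CR spherical, Heisenberg, or Lorentz space form according as $W=1$, $0$, or $-1$.

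\textbf{Main obstacle.} The delicate step is Step 2. The CR Bianchi identity pins down only $\mathrm{Re}(A_{11,\bar 1\bar 1})=W_{0}/2$, leaving the imaginary part---which is what actually appears in $i(A_{11,\bar 1\bar 1}-A_{\bar 1\bar 1,11})$---uncontrolled by geometry alone, and the cross-term $\int_{M}\langle\nabla_{b}W,J(\nabla_{b}f)\rangle\,d\mu$ similarly entangles curvature and potential gradients. The inhomogeneous soliton equation $f_{11}=-A_{11}(1+if)$ must be leveraged to tie both of these back to $\int_{M}|A_{11}|^{2}\,d\mu$ in a sum-of-squares fashion; this is precisely where the algebraic rigidity of the torsion equation compensates for the absence of any curvature-pinching hypothesis (such as the $C_{0}$-positivity required in Theorem \ref{t41}), in direct analogy with Hamilton's observation that every closed three-dimensional Ricci soliton is automatically Einstein.
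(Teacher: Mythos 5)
Your Step 1 is exactly the paper's starting point, and your identity $(\star)$ is correct. But the proof as written is incomplete at precisely the place you flag as the ``main obstacle'': Step 2 is only a plan (``should isolate a coercive nonnegative quantity'') with no execution, and the two terms you treat as problematic in $(\star)$ are in fact directly computable, so no second relation is needed. First, $i\int_{M}(A_{11,\bar 1\bar 1}-A_{\bar 1\bar 1,11})\,d\mu=0$ by the divergence theorem: each summand is a double divergence of a torsion component; equivalently, by the definition of the CR $Q$-curvature this term equals $-(\Delta_{b}W+2Q)$ up to sign, and both $\int_{M}\Delta_{b}W\,d\mu$ and the total $Q$-curvature $\int_{M}Q\,d\mu$ vanish on a closed CR $3$-manifold. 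It is not ``uncontrolled by geometry''; its integral is identically zero regardless of the soliton structure. Second, the cross term is not entangled: a single integration by parts together with the commutation relation $f_{1\bar 1}-f_{\bar 1 1}=if_{0}$ and the first soliton equation $f_{0}=2(\mu-W)$ gives
\begin{equation*}
\int_{M}\langle\nabla_{b}W,J(\nabla_{b}f)\rangle_{\theta}\,d\mu
=-i\int_{M}(f_{1}W_{\bar 1}-f_{\bar 1}W_{1})\,d\mu
=i\int_{M}(f_{1\bar 1}-f_{\bar 1 1})W\,d\mu
=-\int_{M}f_{0}W\,d\mu
=2\int_{M}(W-\mu)^{2}d\mu\geq 0,
\end{equation*}
where the last equality also uses $\int_{M}(W-\mu)\,d\mu=-\tfrac12\int_{M}f_{0}\,d\mu=0$. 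Substituting these two facts into $(\star)$ yields $\int_{M}(W-\mu)^{2}d\mu+\int_{M}|A_{11}|^{2}d\mu=0$, which gives $W\equiv\mu$ and $A_{11}\equiv 0$ simultaneously; this is exactly the paper's argument. Your Step 3 (Kamishima--Tsuboi and Tanno after normalizing $W\in\{1,0,-1\}$) then matches the paper's endgame.

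In short: the strategy is the right one, but the proposal does not close. The second soliton equation and the identities $|f_{11}|^{2}=|A_{11}|^{2}(1+f^{2})$, $A_{\bar 1\bar 1}f_{11}+A_{11}f_{\bar 1\bar 1}=-2|A_{11}|^{2}$ of your Step 2 are never actually turned into the promised second relation, and they are not needed: the theorem follows from $(\star)$ alone once the two remaining integrals are evaluated as above. (Incidentally, the sign in your commutation $f_{10}=-2W_{1}-A_{11}f_{\bar 1}$ should be checked against $\varphi_{01}=\varphi_{10}-A_{11}\varphi_{\bar 1}$, but this does not affect the corrected argument since Step 2 is superfluous.)
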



\begin{proof}
By integrating the Harnack quantity (\ref{2}), one derives that 
\begin{equation*}
\begin{array}{ccl}
2\int_{M}|A_{11}|^{2}d\mu & = & \int_{M}[2W(W-\mu )-W_{0}f+i(f_{1}W_{\bar{1}%
}-f_{\bar{1}}W_{1})]d\mu \\ 
& = & 2\int_{M}W(W-\mu )d\mu +\int_{M}Wf_{0}d\mu -\int_{M}i(f_{1\bar{1}}-f_{%
\bar{1}1})Wd\mu \\ 
& = & 2\int_{M}W(W-\mu )d\mu +2\int_{M}Wf_{0}d\mu \\ 
& = & 2\int_{M}W(W-\mu )d\mu -4\int_{M}W(W-\mu )d\mu \\ 
& = & -2\int_{M}W(W-\mu )d\mu .%
\end{array}%
\end{equation*}%
Together with the fact that $\int_{M}(W-\mu )d\mu =-\frac{1}{2}%
\int_{M}f_{0}d\mu =0$, we obtain%
\begin{equation*}
\begin{array}{cll}
\int_{M}(W-\mu )^{2}d\mu & = & \int_{M}W(W-\mu )d\mu -\mu \int_{M}(W-\mu
)d\mu \\ 
& = & \int_{M}W(W-\mu )d\mu \\ 
& = & -\int_{M}|A_{11}|^{2}d\mu .%
\end{array}%
\end{equation*}%
Hence $W=\mu$ and $A_{11}=0$. This completes the proof.
\end{proof}


\section{Curvature Evolution Equations and Preserving Positive Curvature }


Let $(M^{3},J,\theta )$ be a closed CR manifold. Recall that, as in
Definition 2.1, a function $u$ is called pluriharmonic w.r.t. $\theta $ if 
\begin{equation*}
Pu:=(u_{\bar{1}}{}^{\bar{1}}{}_{1}+iA_{11}u^{1})\theta ^{1}=(P_{1}u)\theta
^{1}=0.
\end{equation*}%
We denote $W^{\perp }=W-W^{\ker }$, where $W^{\ker }$ is the pluriharmonic
portion w.r.t. $\theta $, i.e., $PW^{\ker }=0$ and$\ PW=PW^{\perp }$. Note
that $\theta $ and $\hat{\theta}:=e^{-2\gamma }\theta $ characterize the
same pluriharmonic functions, because $P=e^{-4\gamma }\hat{P}$.

In Theorem 1.1, we have seen that $C_{0}$-positivity is equivalent to the
pointwise pinching condition $W>2C_{0}|A_{11}|$. Here we show that the
uniform pinching condition $W>2C_{0}\max |A_{11}|$ is preserved by the CR
torsion flow with specific initial data. To be precise, given a background
CR manifold $(M^{3},J,\overset{0}{\theta })$ which is pseudo-Einstein, if
there exists a pluriharmonic function $\gamma (0)$ such that $W(0)$ of $%
\theta (0):=e^{2\gamma (0)}\overset{0}{\theta }$ is pluriharmonic, then the
CR torsion flow 
\begin{equation}
\left\{ 
\begin{array}{l}
\frac{\partial J}{\partial t}=2A_{J,\theta }, \\ 
\frac{\partial \theta }{\partial t}=-2W\theta , \\ 
\overset{0}{P}\gamma (t)=0,\ \overset{0}{P}W(t)=0%
\end{array}%
\right.   \label{torsionflow}
\end{equation}%
has a short-time solution $\theta (t)=e^{2\gamma (t)}\overset{0}{\theta }$
on $[0,T)$ which preserves the uniform pinching condition.

Note that the initial contact form $\theta (0)$ of the flow differs from the
background contact form $\overset{0}{\theta }$ by the factor $e^{2\gamma (0)}
$. According to Lee \cite{l1} and Hirachi \cite{hi}, $\theta (0):=e^{2\gamma
(0)}\overset{0}{\theta }$ is still pseudo-Einstein because $\gamma (0)$ is a
pluriharmonic function. This \textquotedblleft gauge changing" plays an
analogue role as DeTurck's trick of the Ricci flow in Chang and Wu's proof
of the existence of the torsion flow $\theta (t)$. Moreover, $\overset{0}{P}%
\gamma (t)=0$ and $\overset{0}{P}W(t)=0$ imply that $\gamma (t)=\frac{1}{2}%
\left( \ln \theta (t)-\ln \overset{0}{\theta }\right) $ and $W(t)$ are still
pluriharmonic for all $t\in \lbrack 0,T)$\ (\cite[p.20]{cw}). This makes the
evolution equations of curvature and torsion easier to handle.
Our result in this section is based on these facts. For more properties
about this CR torsion flow, one can consult \cite{ckw,cw}.

Before we prove the preservation of the uniform pinching condition, we
clarify the definition of pseudo-Einstein mentioned above. Traditionally,
the contact form $\theta $ of a CR manifold $(M^{2n+1},J,\theta )$ is called
pseudo-Einstein if $R_{\alpha \bar{\beta}}-\frac{1}{n}Rh_{\alpha \bar{\beta}%
}=0$. Note that this condition holds trivially when $n=1$ and thus gives no
information about $\theta $. When $n\geq 2$, pseudo-Einstein is equivalent
to $W_{\alpha }-iA_{\alpha \beta ,\bar{\beta}}=0$. However, $W_{\alpha }-iA_{\alpha
\beta ,\bar{\beta}}$ does not always vanishes when $n=1$. This can be seen as an
alternative definition of pseudo-Einstein. So in this article
a CR manifold $(M^{3},J,\theta )$ is called \textit{pseudo-Einstein} if 
\begin{equation*}
H_{1}:=W_{1}-iA_{11,\bar{1}}=0.
\end{equation*}%
Note that to be pseudo-Einstein is neither sufficient to be a self-similar
solution of CR torsion flow or CR Yamabe flow, nor sufficient to conclude
that $W(0)$ is a constant.

\begin{remark}
1. There always exists a pseudo-Einstein contact form on a closed
hypersurface in $\mathbb{C}^{2}$.

2. Let $(M,J,\theta )$ be a closed CR $3$-manifold. It is still open whether
there exists a pseudo-Einstein contact form $\hat{\theta}$ of pluriharmonic
scalar curvature (in particular, the constant scalar curvature)! We refer to 
\cite{cklin} for some details.
\end{remark}

Now we demonstrate the preservation of the uniform pinching condition.

\begin{theoremt61}
Let $(M^{3},J,\overset{0}{\theta })$ be a closed CR manifold with pseudo-Einstein $\overset{0}{\theta}$. 
If there exists a pluriharmonic function $\gamma(0)$ such that $W(0)$ of 
$\theta (0):=e^{2\gamma (0)}\overset{0}{\theta }$ is pluriharmonic, then the solution $\theta(t)$ of the
CR torsion flow (\ref{torsionflow}) has the following evolution equations 
\begin{equation*}
\left\{ 
\begin{array}{ccl}
\frac{\partial W}{\partial t} & = & 5\Delta _{b}W+2(W^{2}-|A_{11}|^{2}), \\ 
\frac{\partial |A_{11}|^{2}}{\partial t} & = & \Delta
_{b}|A_{11}|^{2}-2|\nabla |A_{11}||^{2}.%
\end{array}%
\right. 
\end{equation*}%
Moreover, if $W>2C_{0}\max |A_{11}|$ initially with $C_{0}\geq \frac{1}{2}$,
then it holds for all $t<T$. In particular, the solution $\theta (t)$ is $%
C_{0}$-positive for all $t<T$, i.e., 
\begin{equation*}
(W+C_{0}Tor)(X,X)>0\ \mbox{ for all non-vanishing}\ X\in T_{1,0}(M)\mbox{
and for all }t<T.
\end{equation*}
\end{theoremt61}

\begin{proof}
For generic CR torsion flow, we have (\cite[pp.12-13]{cw}) 
\begin{equation*}
\frac{\partial }{\partial t}{A}_{\bar{1}\bar{1}}=2(iW_{\bar{1}\bar{1}}+W{A}_{%
\bar{1}\bar{1}})-iA{_{\bar{1}\bar{1}}},_{0}
\end{equation*}%
and 
$$\frac{\partial }{\partial t}{W}=  2\mathrm{Re}\left( iA_{11,\bar{1}\bar{%
1}}-A_{11}A_{\bar{1}\bar{1}}\right) +(4\Delta _{b}W+{2W}^{2}).%
$$
Hence we have 
\begin{equation*}
\left\{ 
\begin{array}{ccl}
\frac{\partial W}{\partial t} & = & 4\Delta
_{b}W+2(W^{2}-|A_{11}|^{2})+i(A_{11,\bar{1}\bar{1}}-A_{\bar{1}\bar{1},11}),
\\ 
\frac{\partial A_{11}}{\partial t} & = & -2iW_{11}+A_{11,1\bar{1}}-A_{11,%
\bar{1}1},%
\end{array}%
\right. 
\end{equation*}

Recall that CR $Q$-curvature is defined by 
\begin{equation*}
Q:=-\mathrm{Re}(W_{1}-iA_{11,\bar{1}})_{\bar{1}}=-\mathrm{Re}(W_{1\bar{1}%
}-iA_{11,\bar{1}\bar{1}})=-\frac{1}{2}[\Delta _{b}W-i(A_{11,\bar{1}\bar{1}%
}-A_{\bar{1}\bar{1},11})].
\end{equation*}%
If we denote $H_{1}:=W_{1}-iA_{11},_{\bar{1}}$, then $H_{1\bar{1}}+H_{\bar{1}%
1}=-2Q$ and 
\begin{equation}
\left\{ 
\begin{array}{ccl}
\frac{\partial W}{\partial t} & = & 5\Delta _{b}W+2(W^{2}-|A_{11}|^{2})+2Q,
\\ 
\frac{\partial |A_{11}|^{2}}{\partial t} & = & \Delta
_{b}|A_{11}|^{2}-2|\nabla |A_{11}||^{2}+4\mathrm{Im}(H_{11}A_{\bar{1}\bar{1}%
}).%
\end{array}%
\right.   \label{01}
\end{equation}

Since $\theta(t)$ is pseudo-Einstein for all $t$, we have $H_{1}=0$, thus $%
Q=0$ and $4\mathrm{Im}(H_{11}A_{\bar{1}\bar{1}})=0$. Therefore, we obtain the desired evolution equation. Now applying the maximum principle to the second
evolution equation, one can show that 
\begin{equation*}
|A_{11}(t)|\leq \max |A_{11}(0)|
\end{equation*}%
for all $t<T$. But from the first evolution equation, we have%
\begin{equation*}
\begin{array}{ccl}
\frac{\partial W}{\partial t} & \geq & 5\Delta _{b}W+2(W^{2}-\max
|A_{11}(0)|^{2}) \\ 
& = & 5\Delta _{b}W+2(W+\max |A_{11}(0)|)(W-\max |A_{11}(0)|).%
\end{array}%
\end{equation*}%
It follows from the maximal principle that the condition $W(t)>\max
|A_{11}(t)|$ is preserved under the flow and 
\begin{equation*}
W(t)>\max |A_{11}(0)|\geq \max |A_{11}(t)|,
\end{equation*}%
which is equivalent to say that 
\begin{equation*}
(W+\frac{1}{2}Tor)(Z,Z)>0,\ \ \ Z=x^{1}Z_{1}\in T_{1,0}(M).
\end{equation*}%
holds for all $t<T.$
\end{proof}

\begin{remark}
1. Note that if $M$ is an embeddable CR $3$-manifold in $\mathbb{C}^{N}$,
then there exists a contact form $\overset{0}{\theta }$ of vanishing CR $Q$%
-curvature $\overset{0}{Q}=0$ (\cite{cs}). Hence $Q=0$ and the generic
evolution equations (\ref{01}) become 
\begin{equation*}
\left\{ 
\begin{array}{ccl}
\frac{\partial W}{\partial t} & = & 5\Delta _{b}W+2(W^{2}-|A_{11}|^{2}), \\ 
\frac{\partial |A_{11}|^{2}}{\partial t} & = & \Delta
_{b}|A_{11}|^{2}-2|\nabla |A_{11}||^{2}+4\mathrm{Im}(H_{11}{A}_{\bar{1}\bar{1%
}}).%
\end{array}%
\right. 
\end{equation*}

2. Under the same condition $W>2C_0\max |A_{11}|$, we also obtain that $%
(W-C_0Tor)(X,X)>0$ for all non-vanishing $X\in T_{1,0}(M)$ and for all $t<T$.
\end{remark}

The torsion flow greatly simplifies if the torsion vanishes. This only
happens in very special setups. Indeed, CR $3$-manifolds with vanishing
torsion are $K$-contact, meaning that the Reeb vector field is a Killing
vector field for the contact Riemannian metric $g=\frac{1}{2}d\theta
+\theta^{2}$. In general, one can still hope that the torsion flow improves
properties of the contact manifold underlying the CR structure. It is the
case in a closed homogeneous pseudohermitian $3$-manifold (\cite{ckw}).
Long-time existence and asymptotic convergence of solutions for the
(normalized) torsion flow can be achieved in this special case.


\begin{thebibliography}{CKLin}
\bibitem[Cao]{cao} H.-D. Cao, \emph{On Harnack's inequalities for the K\"{a}%
hler-Ricci flow}, Invent. Math. \textbf{109} (1992), 247--263.

\bibitem[CCC]{ccc} H.-D. Cao, S.-C. Chang and C.-W. Chen, \emph{On
three-dimensional CR Yamabe solitons}, J. Geom. Anal. \textbf{28} (2018), 335--359.

\bibitem[CaoC]{caoc} J. Cao and S.-C. Chang, \emph{Pseudo-Einstein and $Q$%
-flat metrics with eigenvalue estimates on CR-hypersurfaces}, Indiana Univ.
Math. J. \textbf{56} (2007), 2840--2857.

\bibitem[CC]{cc} S.-C. Chang and J.-H. Cheng, \emph{The Harnack estimate for
the Yamabe flow on CR manifolds of dimension $3$}, Ann. Glob. Anal. Geom.
\textbf{21} (2002), 111--121.

\bibitem[CCCh]{ccch} S.- C. Chang, J.- H. Cheng and H.-L. Chiu, \emph{A
fourth order curvature flow on a CR $3$-manifold}, Indiana Univ. Math. J.
\textbf{56} (2007), 1793--1825.

\bibitem[CChi]{cchi} S.-C. Chang and H.-L. Chiu, \emph{On the CR analogue of
Obata's theorem in a pseudohermitian $3$-Manifold}, Math. Ann. \textbf{345}
(2009), 33--51.


\bibitem[CCL]{ccl} D.-C. Chang, S.-C. Chang and C. Lin, \emph{On Li-Yau gradient estimate for sum of squares of vector fields up to higher step}, to
appear in Comm. Anal. Geom..



\bibitem[CFTW]{cftw} S.-C. Chang, Y.-W. Fan, J. Tie and C.-T. Wu, \emph{%
Matrix Li-Yau-Hamilton Inequality for the CR Heat Equation in Pseudohermitian $(2n+1)$-manifolds}, Math. Ann. \textbf{360} (2014), 267--306.

\bibitem[Ch3]{ch3} B. Chow, \emph{The Yamabe flow on locally conformally
flat manifolds with positive Ricci curvature}, Comm. Pure Appl. Math. 
\textbf{45} (1992), 1003--1014.

\bibitem[Chen]{chen} B.-L. Chen, \emph{Strong uniqueness of the Ricci flow},
J. Diff. Geom. \textbf{82} (2009), 363--382.

\bibitem[Cho]{cho} W.-L. Chow, \emph{\"{U}ber System Von Linearen Partiellen
Differentialgleichungen erster Orduung}, Math. Ann. \textbf{117} (1939),
98--105.


\bibitem[CKL]{ckl} S.-C. Chang, T.-J. Kuo, and S.-H. Lai, \emph{Li-Yau
Gradient Estimate and Entropy Formulae for the CR Heat Equation in a Closed
Pseudohermitian 3-manifold}, J. Diff. Geom. \textbf{89} (2011), 185--216.

\bibitem[CKLin]{cklin} S.-C. Chang, T.-J. Kuo, and C. Lin, Pseudo-Einstein
structure, eigenvalue estimate for the CR Paneitz operator and CR rigidity theorem, arXiv:1807.08898.

\bibitem[CKW]{ckw} S.-C. Chang, O. van Koert and C.-T. Wu, \emph{The torsion flow on a closed pseudohermitian $3$-manifold}, arXiv:1305.5391.

\bibitem[CL]{cl} J.-H. Cheng and J. M. Lee, \emph{The Burns-Epstein invariant and deformation of the CR structures}, Duke Math. J. \textbf{60} (1990), 221--254.

\bibitem[CLN]{cln} B. Chow, P. Lu, L. Ni, \emph{Hamilton's Ricci flow}.
Grad. Stud. Math., 77. Amer. Math. Soc., Providence, RI, 2006.

\bibitem[CMM]{cmm} G. Catino, C. Mantegazza, and L. Mazzieri, \emph{On the global structure of conformal gradient solitons with nonnegative Ricci tensor}, Commun. Contemp. Math. \textbf{14} 1250045(2012), no. 6, 12 pages.

\bibitem[CS]{cs} S.-C. Chang and T. Saotome, \emph{The $Q$-curvature flow in
a closed CR 3-manifold}, Proceedings of the 15th International Workshop on Differential Geometry and the 4th KNUGRG-OCAMI Differential Geometry Workshop [Volume 15], 57--69. Natl. Inst. Math. Sci. (NIMS), Taej\u{o}n, 2011.

\bibitem[CSZ]{csz} H.-D. Cao, X. Sun and Y. Zhang, \emph{On the structure of
gradient Yamabe solitons}, Math. Res. Lett. \textbf{19} (2012), 767--774.


\bibitem[CT1]{ct1} X. X. Chen and G. Tian. \emph{Ricci flow on Kahler-Einstein surfaces}, Invent. Math. \textbf{147} (2002), 487--544.

\bibitem[CT2]{ct2} X. X. Chen, and G. Tian. \emph{Ricci flow on Kahler-Einstein manifolds}, Duke Math. J. \textbf{131} (2006), 17--73.


\bibitem[CY]{cy} H.-D. Cao and S.-T. Yau, \emph{Gradient Estimate, Harnack
Inequalities and Estimates for Heat Kernel of the Sum of Squares of Vector
Fields}, Math. Zeit. \textbf{221} (1992), 485--504.

\bibitem[CW]{cw} S.-C. Chang and C.-T. Wu, \emph{Short-time existence theorem for the CR torsion flow}, arXiv:1804.06585.

\bibitem[DT]{dt} S. Dragomir and G. Tomassini, \emph{Differential Geometry
and Analysis on CR manifolds}, Progress in Mathematics, vol. 246, Birkh\"{a}user, Basel, 2006.

\bibitem[FH]{fh} C. Fefferman and K. Hirachi, \emph{Ambient metric
construction of $Q$-curvature in conformal and CR geometries}, Math. Res. Lett. \textbf{10} (2003), 819--831.

\bibitem[G]{g} J. W. Gray, \emph{Some global properties of contact structures%
}, Ann. Math. \textbf{69} (1959), 421--450.

\bibitem[GL]{gl} C. R. Graham and J. M. Lee, \emph{Smooth solutions of
degenerate Laplacians on strictly pseudoconvex domains}, Duke Math. J. 
\textbf{57} (1988), 697--720.






\bibitem[Hi]{hi} K. Hirachi, \emph{Scalar pseudo-hermitian invariants and
the Szeg\"{o} Kernel on $3$-dimensional CR Manifolds}, Lecture Notes in Pure and Appl. Math. 143, 67--76. Dekker, 1992.

\bibitem[Ho]{ho} P.-T. Ho, \emph{A note on compact CR Yamabe solitons}, J.
Geom. Phys. \textbf{94} (2015), 32--34.

\bibitem[Hsu]{hsu} S.-Y. Hsu, \emph{A note on compact gradient Yamabe
solitons}, J. Math. Anal. Appli. \textbf{388} (2012), 725--726.

\bibitem[KT]{kt} Y. Kamishima and T. Tsuboi, \emph{CR-structures on Seifert
manifolds}, Invent. Math. \textbf{104} (1991), 149--163.

\bibitem[L1]{l1} J. M. Lee, \emph{Pseudo-Einstein structure on CR Manifolds}, Amer. J. Math. \textbf{110} (1988), 157--178.

\bibitem[L2]{l2} J. M. Lee, \emph{The Fefferman metric and pseudohermitian
invariants}, Trans. Amer. Math. Soc. \textbf{296} (1986), 411--429.

\bibitem[Pe1]{pe1} G. Perelman, \emph{The Entropy Formula for the Ricci Flow
and its Geometric Applications}, arXiv: Math.DG/0211159.

\bibitem[Pe2]{pe2} G. Perelman, \emph{The Ricci Flow with Surgery on
Three-manifolds}, arXiv:Math.DG/0303109.



\bibitem[T]{t} S. Tanno, \emph{Sasakian manifolds with constant $\phi$%
-holomorphic sectional curvature}, T\^{o}hoko Math. J. \textbf{21} (1969),
501--507.
\end{thebibliography}
\end{document}